\theoremstyle{plain}
\newtheorem{theorem}[subsection]{{\bf Theorem}}
\newtheorem*{theorem*}{{\bf Theorem}}
\newtheorem{corollary}[subsection]{{\bf Corollary}}
\newtheorem*{corollary*}{{\bf Corollary}}
\newtheorem{proposition}[subsection]{{\bf Proposition}}
\newtheorem{lemma}[subsection]{{\bf Lemma}}
\theoremstyle{definition}
\theoremstyle{remark}
\newtheorem{example}[subsection]{{\it Example}}
\numberwithin{equation}{section}
\DeclareMathOperator{\HH}{H}
\DeclareMathOperator{\B}{B}
\DeclareMathOperator{\M}{M}
\DeclareMathOperator{\cp}{cp}
\DeclareMathOperator{\kk}{k}
\DeclareMathOperator{\Out}{Out}
\newcommand{\QZ}{\mathbb{Q}/\mathbb{Z}}
\newcommand{\Z}{\mathbb{Z}}
\begin{document}
\baselineskip=14pt
\title[Bogomolov multipliers and commuting probability]
{Universal commutator relations, Bogomolov multipliers, and commuting 
probability}
\author[U. Jezernik]{Urban Jezernik}
\address[Urban Jezernik]{
Institute of Mathematics, Physics, and Mechanics \\
Jadranska 19 \\
1000 Ljubljana \\
Slovenia}
\thanks{}
\email{urban.jezernik@imfm.si}
\author[P. Moravec]{Primo\v{z} Moravec}
\address[Primo\v{z} Moravec]{
Department of Mathematics \\
University of Ljubljana \\
Jadranska 21 \\
1000 Ljubljana \\
Slovenia}
\thanks{}
\email{primoz.moravec@fmf.uni-lj.si}
\subjclass[2010]{14E08 (primary), 13A50, 20D15, 20F12 (secondary)}
\keywords{Unramified Brauer group, Bogomolov multiplier, $\B_0$-minimal 
group, commuting probability}
\thanks{}
\date{\today}
\begin{abstract}
\noindent
Let $G$ be a finite $p$-group. We prove that whenever the commuting probability of $G$ is greater than $(2p^2 + p - 2)/p^5$, the unramified Brauer group of the field of $G$-invariant functions is trivial. Equivalently, all relations between commutators in $G$ are consequences of some universal ones. The bound is best possible, and gives a global lower bound of $1/4$ for all finite groups. The result is attained by describing the structure of groups whose Bogomolov multipliers are nontrivial, and Bogomolov multipliers of all of their proper subgroups and quotients are trivial. Applications include a classification of $p$-groups of minimal order that have nontrivial Bogomolov multipliers and are of nilpotency class $2$, a nonprobabilistic criterion for the vanishing of the Bogomolov multiplier, 
and establishing a sequence of Bogomolov's absolute $\gamma$-minimal factors which are $2$-groups of arbitrarily large nilpotency class, thus providing counterexamples to some of Bogomolov's claims.
In relation to this, we fill a gap in the proof of triviality of Bogomolov multipliers of finite simple groups.
\end{abstract}
\maketitle
\section{Introduction}
\label{s:intro}

\noindent
Let $G$ be a group and $G\wedge G$ the group generated by the symbols $x\wedge y$, where $x,y\in G$, subject to the following relations:
\[	
\begin{aligned}
\label{eq:ext}
xy\wedge z = (x^y\wedge z^y)(y\wedge z), \quad 
x\wedge yz = (x\wedge z)(x^z\wedge y^z), \quad
x\wedge x = 1,
\end{aligned}
\]
where $x,y,z\in G$. The group $G\wedge G$ is said to be the {\it nonabelian exterior square} of $G$. There is a surjective homomorphism $G\wedge G\to [G,G]$ defined by $x\wedge y\mapsto [x,y]$. Miller \cite{Mil52} showed that the kernel $\M (G)$ of this map is naturally isomorphic to the Schur multiplier $\HH_2(G,\mathbb{Z})$ of $G$. In particular, this implies that the relations \eqref{eq:ext}  induce universal commutator identities that hold in a free group, and that $\M(G)$ is, in a sense, a measure of how the commutator identities in $G$ fail to follow from the universal ones only.

Denote $\M _0(G)=\langle x\wedge y\mid x,y\in G,\, [x,y]=1\rangle$ and $\B_0(G)=\M (G)/\M _0(G)$. On one hand, $\B_0(G)$ is an obstruction for the commutator identities of $G$ to follow from the universal commutator identities induced by \eqref{eq:ext} while considering the symbols that generate $\M_0(G)$ as redundant. In the group-theoretical framework, $\B_0(G)$ is accordingly the group of {\it nonuniversal commutator relations} rather than identities that hold in $G$. On the other hand, it is shown in \cite{Mor11} that if $G$ is a finite group and $V$ a faithful representation of $G$ over $\mathbb{C}$, then the dual of $\B_0(G)$ is naturally isomorphic to the unramified Brauer group $\HH^2_{\rm nr}(\mathbb{C}(V)^G,\QZ )$ introduced by Artin and Mumford \cite{Art72}. This invariant represents an obstruction to {\it Noether's problem} \cite{Noe16} asking as to whether the field of $G$-invariant functions $\mathbb{C}(V)^G$ is purely transcendental over $\mathbb{C}$. The crucial part of the proof of the above mentioned result of \cite{Mor11} is based on the ground-breaking work of Bogomolov \cite{Bog88} who showed that $\HH^2_{\rm nr}(\mathbb{C}(V)^G,\QZ )$ is naturally isomorphic to the intersection of the kernels of restriction maps $\HH^2(G,\QZ)\to \HH^2(A,\QZ)$, where $A$ runs through all (two-generator) abelian subgroups of $G$. The latter group is also known as the {\it Bogomolov multiplier} of $G$, cf. \cite{Kun08}. Throughout this paper, we use this terminology for $\B_0(G)$, thus considering the Bogomolov multiplier of $G$ as the kernel of the commutator map from $G \curlywedge G=(G\wedge G)/\M_0(G)$ to $[G,G]$.
This description of $\B_0$ is combinatorial and enables efficient explicit calculations, see \cite{Che13,ARXIVREF,Mor11,Mor12p5} for further details.
In addition to that, it provides an easy proof \cite{Mor14} of the fact that Bogomolov multipliers are invariant with respect to isoclinism, a notion defined by P. Hall in his seminal paper \cite{Hal40} on classifying finite $p$-groups.

In this paper, we consider the problem of triviality of $\B_0$ from the probabilistic point of view. As noted above, the structure of Bogomolov multipliers heavily depends on the structure of commuting pairs of elements of a given group. Denote $\mathcal{S}_G=\{ (x,y)\in G\times G\mid xy=yx\}$. Then the quotient $\cp (G)=|\mathcal{S}_G| / |G|^2$ is the probability that a randomly chosen pair of elements of $G$ commute. Erd\"{o}s and Tur\'{a}n \cite{Erd68} noted that $\cp (G)=\kk (G)/|G|$, where $\kk (G)$ is the number of conjugacy classes of $G$. Gustafson \cite{Gus73} proved an amusing result that if $\cp (G)>5/8$, then $G$ is abelian, and hence $\B_0(G)$ is trivial. We prove the following:

\begin{theorem}
\label{t:cp-p}
Let $G$ be a finite $p$-group. If $\cp (G)>(2p^2 + p - 2)/p^5$, then $\B_0(G)$ is trivial.
\end{theorem}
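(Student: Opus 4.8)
The plan is to prove the contrapositive by a minimal counterexample argument, reducing to the $\B_0$-minimal groups described in the abstract and then bounding their commuting probability directly. The reduction rests on two monotonicity facts. First, $\cp$ does not increase when passing to sections: for $N\trianglelefteq G$ the map $G\times G\to (G/N)\times(G/N)$ is $|N|^2$-to-one and carries commuting pairs to commuting pairs, so $\cp(G)\le\cp(G/N)$; and for $H\le G$ one has $C_H(g)=C_G(g)\cap H$, hence $[C_G(g):C_H(g)]\le[G:H]$, so that $|G|^2\cp(G)=\sum_{g\in G}|C_G(g)|\le[G:H]\sum_{g\in G}|C_H(g)|=[G:H]\sum_{h\in H}|C_G(h)|\le[G:H]^2\sum_{h\in H}|C_H(h)|=|G|^2\cp(H)$, giving $\cp(G)\le\cp(H)$. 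Now suppose the theorem fails and let $G$ be a counterexample of least order, so $\B_0(G)\ne 1$ while $\cp(G)>(2p^2+p-2)/p^5$. If some proper subgroup $H$ or proper quotient $G/N$ had nontrivial Bogomolov multiplier, then by the inequalities above it would satisfy $\cp\ge\cp(G)>(2p^2+p-2)/p^5$ at smaller order, contradicting minimality. Hence $G$ is $\B_0$-minimal, and it suffices to prove $\cp(G)\le(2p^2+p-2)/p^5$ for every $\B_0$-minimal $p$-group.

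Next I would exploit that both invariants are isoclinism invariants: $\B_0$ by \cite{Mor14} and $\cp$ by the classical observation of Lescot. Within its isoclinism class \cite{Hal40} I may therefore replace $G$ by a stem group, for which $Z(G)\le[G,G]$. The structural heart of the argument is then to analyze $\B_0$-minimal stem groups through the combinatorial description of the Bogomolov multiplier as $\B_0(G)=\ker\!\big(G\curlywedge G\to[G,G]\big)$: nontriviality of $\B_0$ means there is a commutator relation that is not a consequence of the universal ones even after discarding the generators $x\wedge y$ with $[x,y]=1$. This forbids degenerate or low-dimensional commutator structure. The aim here is to show that $\B_0$-minimality forces nilpotency class $2$ with $Z(G)=[G,G]$ (the higher-class $\gamma$-minimal $2$-groups of the abstract being a separate phenomenon that I would show carry strictly smaller $\cp$), together with the quantitative constraints $|G/Z(G)|\ge p^4$ and $|[G,G]|\ge p^3$. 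Intuitively, these are exactly the thresholds below which the kernel plane in $\mathbb{P}(\wedge^2(G/Z(G)))$ is spanned by decomposable (commuting) vectors, so that $\B_0$ collapses.

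With the class-$2$ reduction in hand, I would convert the bound into a counting problem via the breadth $b(x)$ defined by $|C_G(x)|=|G|/p^{\,b(x)}$. Since $[x,G]$ depends only on the coset $xZ(G)$, so does $b$, and the centralizer-sum formula becomes
\[
\cp(G)=\frac{1}{|G|^2}\sum_{x\in G}|C_G(x)|=\frac{1}{|G/Z(G)|}\sum_{\bar x\in G/Z(G)}p^{-b(\bar x)}.
\]
Maximizing $\cp$ amounts to making breadths as small as possible, so the task is precisely to show that $\B_0$-minimality prevents too many cosets of small breadth. In the extremal configuration $|G/Z(G)|=p^4$ and $b\le 3$, with a single coset of breadth $0$, no coset of breadth $1$, exactly $p^3+p^2-p-1$ cosets of breadth $2$ and the remaining $p^4-p^3-p^2+p$ cosets of breadth $3$; a direct evaluation of the displayed sum returns $2p+1-2/p$, and dividing by $p^4$ gives exactly $(2p^2+p-2)/p^5$. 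This both yields the inequality and exhibits tightness, and specializing to $p=2$ produces the global lower bound $1/4$.

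The main obstacle is the structure theory underlying the second paragraph: proving that $\B_0$-minimality genuinely forces $|G/Z(G)|\ge p^4$ and $|[G,G]|\ge p^3$, that the extremal breadth profile above is the one maximizing $\cp$ among all admissible class-$2$ configurations with $\B_0\ne 1$, and that no higher-class $\B_0$-minimal group can exceed it. Each of these requires a careful, case-by-case use of the exterior/commutator description of $\B_0$ to locate a non-universal relation, followed by an optimization over admissible breadth distributions to certify that the displayed constant is attained and not surpassed. The monotonicity reduction and the breadth computation are routine by comparison; the delicate point is pinning down the minimal amount of commutator complexity that a nontrivial Bogomolov multiplier imposes.
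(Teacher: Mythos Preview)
Your reduction to a stem $\B_0$-minimal group is correct and matches the paper's opening move. The isoclinism invariance of both $\cp$ and $\B_0$, and the monotonicity of $\cp$ under sections, are exactly the tools needed.

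The genuine gap is the higher-class case. You write that you ``would show'' $\B_0$-minimal groups of class $\geq 3$ carry strictly smaller $\cp$, but offer no mechanism for doing so; in the paper this is not a side remark but the bulk of the argument. The paper's proof for class $\geq 3$ rests on two structural facts proved earlier (Theorem~\ref{t:frattini}): a $\B_0$-minimal group of class $\geq 3$ has \emph{abelian Frattini subgroup} and is generated by at most \emph{three} elements $g_1,g_2,g_3$ which, by Lemma~\ref{l:generators}, can be chosen so that $[g_1,f]=[g_3,g_2]$ for some $f\in\gamma_{c-1}(G)$. One then introduces the minimal conjugacy-class sizes $x,y,z$ attached to cosets of $\Phi(G)$ containing $g_3,g_2,g_1$ respectively, and a centralizer-counting inequality of the shape
\[
\frac{2p+1}{p^4}<\frac{1}{p^2x}+\frac{1}{py}+\frac{1}{z}
\]
forces $x,y,z$ into a short finite list. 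Each surviving case is then eliminated by showing it bounds $|G|$ below $p^7$ (or $2^8$), using repeatedly that $\Phi(G)$ is abelian and that certain curly-wedge equalities would otherwise contradict $\B_0$-minimality via Lemma~\ref{l:generators}. None of this machinery appears in your sketch, and the breadth-profile optimisation you propose for class $2$ does not transfer: in class $\geq 3$ breadths are no longer functions on $G/Z(G)$ in the same clean way, and the constraint ``$\B_0\neq 0$'' does not translate into a linear condition on a breadth distribution.

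Even in class $2$ your route diverges from the paper and is incomplete as stated. The paper does not maximise $\cp$ over an abstract space of breadth profiles; it \emph{classifies} the $\B_0$-minimal class-$2$ families (Theorem~\ref{t:class2}) and reads off $\cp$ for the two resulting stem groups. Your extremal profile (``no coset of breadth $1$'') is not the profile of the actual extremal group $G_2$: in $G_2$ there are $p-1$ cosets of breadth $1$. That your numerics still land on $(2p^2+p-2)/p^5$ is a coincidence of the arithmetic, not evidence that the optimisation has been carried out; you have not explained what constraints $\B_0\neq 0$ imposes on the profile, nor why your configuration is the maximiser among them. To make your approach work you would need a structural result of comparable depth to Theorem~\ref{t:class2}, and for class $\geq 3$ something like Theorem~\ref{t:frattini} together with the case analysis the paper performs.
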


Homological arguments then give a global bound applicable to all finite groups.

\begin{corollary}
\label{c:cp}
Let $G$ be a finite group. If $\cp (G)>1/4$, then $\B_0(G)$ is trivial.
\end{corollary}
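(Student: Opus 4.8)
The plan is to deduce the corollary from Theorem~\ref{t:cp-p} by reducing to Sylow subgroups: I would show that if every Sylow subgroup of $G$ has trivial Bogomolov multiplier then so does $G$, and then verify that the hypothesis $\cp(G)>1/4$ passes to each Sylow subgroup in a form strong enough to invoke the theorem. The argument splits into a homological reduction, an elementary commuting-probability estimate, and a numerical comparison of thresholds.

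For the homological reduction, I would use Bogomolov's description dually, writing the dual of the Bogomolov multiplier as $\B_0(G)^\ast=\bigcap_A\ker\big(\res^G_A\colon\HH^2(G,\QZ)\to\HH^2(A,\QZ)\big)$, the intersection over abelian subgroups $A\le G$. Fix a prime $p$ and a Sylow $p$-subgroup $P\le G$. First, restriction carries $\B_0(G)^\ast$ into $\B_0(P)^\ast$: if $\alpha$ vanishes on every abelian subgroup of $G$ and $A'\le P$ is abelian, then $A'$ is abelian in $G$, so $\res^P_{A'}\res^G_P\alpha=\res^G_{A'}\alpha=0$. Second, the transfer identity $\cor^G_P\circ\res^G_P=[G:P]$ on $\HH^2(G,\QZ)$ is multiplication by an integer prime to $p$, hence an isomorphism on the $p$-primary component; therefore $\res^G_P$ is injective on $\B_0(G)^\ast_{(p)}$, and $\B_0(P)=0$ forces $\B_0(G)^\ast_{(p)}=0$. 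Since $\B_0(G)$ is finite and is the direct sum of its $p$-primary parts, it suffices to prove $\B_0(P)=0$ for every Sylow subgroup $P$.

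Next I would transport the hypothesis to $P$ via the elementary estimate $\cp(G)\le\cp(H)$ for every subgroup $H\le G$. This rests on the index inequality $[C_G(x):C_G(x)\cap H]\le[G:H]$, valid because $|C_G(x)\,H|=|C_G(x)|\,|H|/|C_G(x)\cap H|\le|G|$, which gives $|C_G(x)|/|G|\le|C_G(x)\cap H|/|H|$ for each $x$. Summing over $x\in G$, rewriting $\sum_{x\in G}|C_G(x)\cap H|=\sum_{h\in H}|C_G(h)|$, and applying the same inequality to each $h\in H$ yields
\[
\cp(G)=\frac{1}{|G|^2}\sum_{x\in G}|C_G(x)|\le\frac{1}{|G|\,|H|}\sum_{h\in H}|C_G(h)|\le\frac{1}{|H|^2}\sum_{h\in H}|C_H(h)|=\cp(H).
\]
In particular $\cp(P)\ge\cp(G)>1/4$. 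Finally I would compare thresholds: setting $f(p)=(2p^2+p-2)/p^5$, a direct check gives $f(2)=8/32=1/4$, and $4(2p^2+p-2)\le p^5$ holds with equality only at $p=2$, so $f(p)\le 1/4$ for every prime $p$. Hence $\cp(P)>1/4\ge f(p)$, Theorem~\ref{t:cp-p} applies to $P$ and gives $\B_0(P)=0$, and the Sylow reduction yields $\B_0(G)=0$.

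I do not expect a genuine obstacle, since the hard analytic content is already packaged in Theorem~\ref{t:cp-p} and the corollary is a reduction. The only points needing care are the compatibility of restriction with the Bogomolov multiplier, so that the transfer argument stays inside $\B_0$, and the exact coincidence $f(2)=1/4$, which is precisely what makes the global bound $1/4$ both valid for all primes and best possible.
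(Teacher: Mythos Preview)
Your proposal is correct and follows essentially the same route as the paper: reduce to Sylow subgroups via the restriction--corestriction (transfer) argument on $\B_0$, pass the inequality $\cp(G)\le\cp(P)$ to the Sylow subgroup, and invoke Theorem~\ref{t:cp-p} using $(2p^2+p-2)/p^5\le 1/4$. The only difference is cosmetic: the paper outsources the Sylow injection to \cite[Lemma~2.6]{Bog04} and the monotonicity of $\cp$ to \cite{Gur06}, whereas you supply both arguments directly.
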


The stated bounds are all sharp. Namely, there exists a group $G$ of order $p^7$ such that $\cp (G)=(2p^2 + p - 2)/p^5$ and $\B_0(G)$ is not trivial. We also mention here that the converse of Theorem \ref{t:cp-p} and Corollary \ref{c:cp} does not hold, nor is there an upper bound on commuting probability ensuring nontriviality of the Bogomolov multiplier.
Examples, as well as some information on groups satisfying the condition of Theorem \ref{t:cp-p}, are provided in the following sections.

The proof of Theorem \ref{t:cp-p} essentially boils down to studying a minimal counterexample. It suffices to consider only finite groups $G$ with $\B_0(G)$ nontrivial, and for every proper subgroup $H$ of $G$ and every proper normal subgroup $N$ of $G$, we have $\B_0(H) = \B_0(G/N) = 0$. We call such groups {\em $\B_0$-minimal groups}. These are a special type of absolute $\gamma$-minimal factors which were introduced by Bogomolov \cite{Bog88}. In the context of nonuniversal commutator relations, $\B_0$-minimal groups are precisely the minimal groups possessing such relations, and may in this way be thought of as the building blocks of groups with nontrivial Bogomolov multipliers. The $\B_0$-minimal groups are thus of independent interest, and the first part of the paper is devoted to describing their structure. A part of this has already been investigated by Bogomolov \cite{Bog88} using cohomological methods; the alternative approach we take via the exterior square provides new proofs and refines that work. Standard arguments show that $\B_0$-minimal groups are $p$-groups. We prove in Theorem \ref{t:frattini} that they can be generated by at most four generators, have an abelian Frattini subgroup, and that their Bogomolov multipliers are of prime exponent, see Corollary \ref{c:exponentB0}. In addition to that, we further explore the structure of $\B_0$-minimal 2-groups, cf. Proposition \ref{p:minimalbreadth}. 

We also consider $\B_0$-minimal groups with respect to isoclinism. We call an isoclinism family to be a {\it $\B_0$-minimal family} if it contains a $\B_0$-minimal group. We observe that all stem groups of a $\B_0$-minimal family are $\B_0$-minimal. The main result in this direction is Theorem \ref{t:class2} which provides a classification of all $\B_0$-minimal isoclinism families of class 2. It turns out that these families are always determined by two stem groups whose presentations can be explicitly written down. Relying on some recent results \cite{Hos11,Hos12,Che13,ARXIVREF} on Bogomolov multipliers of $p$-groups of small orders, Theorem \ref{t:class2} in fact provides a classification of all $p$-groups of order $p^7$ and nilpotency class $2$ with nontrivial Bogomolov multipliers.

In his work on absolute $\gamma$-minimal factors, Bogomolov claimed \cite[Theorem 4.6]{Bog88} that if a finite $p$-group is an absolute $\gamma$-minimal factor, then it is nilpotent of class at most $p$. We use the structural results on $\B_0$-minimal groups we develop in Section \ref{s:minimal}, and also Corollary \ref{c:cp}, to construct a sequence of $\B_0$-minimal $2$-groups with strictly growing nilpotency classes, cf. Example \ref{e:bignilpotency}. This example contradicts the above mentioned Bogomolov's result.
The existence of groups in Example \ref{e:bignilpotency} also contradicts \cite[Lemma 5.4]{Bog88}, which has been used subsequently in proving that the linear and orthogonal finite simple groups have trivial Bogomolov multipliers \cite{Kun08}. We patch up the argument using Corollary \ref{c:cp} in the final section.

\section{$\B_0$-minimal groups}
\label{s:minimal}

\noindent
A finite group $G$ is termed to be a {\em $\B_0$-minimal group} whenever $\B_0(G) \neq 0$ and for every proper subgroup $H$ of $G$ and every proper normal subgroup $N$ of $G$, we have $\B_0(H) = \B_0(G/N) = 0$. The class of $\B_0$-minimal groups is a subclass of the class of absolute $\gamma$-minimal factors defined by Bogomolov \cite{Bog88}. 

Recall the notion of isoclinism of groups introduced by P. Hall \cite{Hal40}. Two groups $G$ and $H$ are {\it isoclinic} if there exists a pair of isomorphisms $\alpha \colon G/Z(G)\to H/Z(H)$ and $\beta \colon [G,G]\to [H,H]$ with the property that whenever $\alpha(a_1Z(G))=a_2Z(H)$ and $\alpha (b_1Z(G))=b_2Z(H)$, then $\beta ([a_1,b_1])=[a_2,b_2]$ for $a_1,b_1\in G$. Isoclinism is an equivalence relation, denoted by the symbol $\simeq$, and the equivalence classes are called {\it families}. Hall proved that each family contains {\it stem groups}, that is, groups $G$ satisfying $Z(G)\le [G,G]$. Stem groups in a given family have the same order, which is the minimal order of all groups in the family. When the stem groups are of order $p^r$ for some $r$, we call $r$ the {\em rank} of the family.

\begin{example}
\label{e:64}
Let $G$ be the group
\[
\left\langle \begin{array}{l|l}
			 \multirow{2}{*}{$a,b,c$} & a^2 = b^2 = 1, \, c^2 = [a,c], \\
			 & [c,b] = [c, a, a],  \, [b,a] \text{ central}, \, \text{class $3$}
		\end{array} \right\rangle.
\]
Another way of presenting $G$ is by a polycyclic generating sequence $g_i$ with $1 \leq i \leq 6$, subject to the following relations:
$g_1^2 = g_2^2 = 1$,
$g_3^2 = g_4 g_5$,
$g_4^2 = g_5$,
$g_5^2 = g_6^2 = 1$,
$[g_2, g_1] = g_6$,
$[g_3, g_1] = g_4$,
$[g_3, g_2] = g_5$,
$[g_4, g_1] = g_5$, and
$[g_i,g_j] = 1$ for other $i>j$.
This is one of the stem groups of the family $\Gamma_{16}$ of \cite{Hal64}. It follows from \cite{Chu09} that $\B_0(G)\cong \mathbb{Z}/2\mathbb{Z}$. Application of the algorithm developed in \cite{Mor11} shows that $\B_0(G)$ is generated by the element $(g_3 \curlywedge g_2)(g_4 \curlywedge g_1)$ in $G \curlywedge G$. The group $G$ is one of the groups of the smallest order that have a nontrivial Bogomolov multiplier \cite{Chu08,Chu09}, so it is also of minimal order amongst all $\B_0$-minimal groups.
\end{example}

The notion of isoclinism is closely related to Bogomolov multipliers. It is shown in \cite{Mor14} that whenever $G$ and $H$ are isoclinic groups, we have $\B_0(G)\cong \B_0(H)$, i.e. the Bogomolov multiplier is a family invariant. A family that contains at least one $\B_0$-minimal group is correspondingly called a {\em $\B_0$-minimal family}. Note that not every group in a $\B_0$-minimal family is itself $\B_0$-minimal. For example, one may take a $\B_0$-minimal group $G$, a nontrivial abelian group $A$, and form their direct product $G \times A \simeq G$. This group is clearly not $\B_0$ minimal. We show, however, that the stem groups of $\B_0$-minimal families are themselves $\B_0$-minimal.

\begin{proposition}
\label{p:families}
In a $\B_0$-minimal family, every group possesses a $\B_0$-minimal section. In particular, the stem groups in the family are all $\B_0$-minimal.
\end{proposition}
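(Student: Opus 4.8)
The plan is to establish the two assertions separately, deriving the second from a transfer argument along isoclinisms rather than from the first. Two ingredients will be used throughout: that $\B_0$ is a family invariant \cite{Mor14}, so that every member of the given $\B_0$-minimal family $\mathcal{F}$ has nontrivial Bogomolov multiplier; and the elementary moves of isoclinism theory \cite{Hal40}, namely that $G \simeq G/M$ whenever $M \le Z(G)$ with $M \cap [G,G] = 1$, and that $H \simeq G$ whenever $H \le G$ with $G = HZ(G)$.

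For the first assertion I would take any $G \in \mathcal{F}$ and choose a section $S = H/N$ of $G$ of minimal order subject to $\B_0(S) \neq 0$, which exists since $\B_0(G) \neq 0$. As a section of a section of $G$ is again a section of $G$, every proper subgroup and every proper quotient of $S$ is a smaller section of $G$ and so has trivial multiplier by minimality; hence $S$ is $\B_0$-minimal. It is worth noting at this point that a $\B_0$-minimal group $S$ must be a stem group of its own family: were $Z(S) \not\le [S,S]$, one could split off a central subgroup $M$ of order $p$ meeting $[S,S]$ trivially and obtain a proper quotient $S/M \simeq S$ with $\B_0(S/M) \cong \B_0(S) \neq 0$, a contradiction. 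In particular the $\B_0$-minimal group furnished by the family $\mathcal{F}$ is one of its stem groups; call it $G_0$.

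For the second assertion let $G$ be an arbitrary stem group of $\mathcal{F}$, so that $|G|$ is minimal in $\mathcal{F}$ and $Z(G) \le [G,G]$. Fixing the isoclinism $G \simeq G_0$, I would invoke the two correspondences it induces (again \cite{Hal40}): subgroups of $G$ containing $Z(G)$ match, up to isoclinism, with subgroups of $G_0$ containing $Z(G_0)$, and normal subgroups of $G$ lying in $[G,G]$ match, up to isoclinism, with normal subgroups of $G_0$ lying in $[G_0,G_0]$. Given a proper subgroup $H < G$, the subgroup $HZ(G)$ cannot be all of $G$ --- otherwise $H \simeq G$ would place $H$ in $\mathcal{F}$, forcing $|H| \ge |G|$ --- so $HZ(G)$ is proper and contains $Z(G)$, hence is isoclinic to a proper subgroup of $G_0$ and therefore has trivial multiplier; since $HZ(G) = H \cdot Z(HZ(G))$, we get $H \simeq HZ(G)$ and so $\B_0(H) = 0$. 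Given a proper quotient $G/N$, I would first pass to $G/(N \cap [G,G])$, which is isoclinic to $G/N$ because the image of $N$ there is central and meets the derived subgroup trivially; the stem hypothesis guarantees this reduction keeps the quotient proper, for $N$ meets $Z(G) \le [G,G]$ nontrivially and hence $N \cap [G,G] \neq 1$. The quotient correspondence then identifies $G/(N \cap [G,G])$, up to isoclinism, with a proper quotient of $G_0$, which has trivial multiplier, so $\B_0(G/N) = 0$.

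The step I expect to carry the real weight is the transfer to $G_0$: everything hinges on the reductions landing on genuinely proper subgroups and genuinely nontrivial quotients of $G_0$, and this is exactly where the defining properties of a stem group are consumed --- the minimality of $|G|$ for the subgroup case, and $Z(G) \le [G,G]$ for the quotient case. Granting the isoclinism correspondences of \cite{Hal40}, the remaining bookkeeping is routine, and the two assertions follow.
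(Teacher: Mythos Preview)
Your proof works, but by a different route than the paper's, and it contains one false side claim that happens not to be load-bearing.

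The paper argues as follows. Starting from a $\B_0$-minimal witness $G$ in the family and any other member $H$, it uses Hall's observation that the isoclinism classes of subgroups and quotients of $H$ coincide with those of $G$. If $H$ is not $\B_0$-minimal, some proper subgroup or quotient $K$ of $H$ has nontrivial multiplier; but then $K$ must be isoclinic to $G$ itself (all proper subgroups and quotients of $G$ have trivial $\B_0$), so $K \in \mathcal{F}$ with $|K| < |H|$. Iterating produces a $\B_0$-minimal section of $H$ \emph{that lies in $\mathcal{F}$}. The stem claim then drops out: a stem group has minimal order in the family, so this section must be the group itself.

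Your argument splits the two assertions. The first you dispatch more directly --- a minimal-order section with nontrivial $\B_0$ is automatically $\B_0$-minimal --- but this does not show the section stays in $\mathcal{F}$, so you need a separate transfer argument for stem groups via the Hall correspondences. That argument is sound: you only use that $G_0$ is $\B_0$-minimal and isoclinic to $G$, and both reductions land on genuinely proper pieces of $G_0$. One small point: the step ``$N$ meets $Z(G)$ nontrivially'' is unjustified and need not hold in a general finite group, but you do not need it. If $N \cap [G,G] = 1$ then $[N,G] \le N \cap [G,G] = 1$, so $N \le Z(G) \le [G,G]$ by the stem hypothesis, forcing $N = 1$ directly.

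The claim that a $\B_0$-minimal group is necessarily stem, however, is false: see Example~\ref{e:256}, a $\B_0$-minimal $2$-group with $g_7 \in Z(G) \setminus [G,G]$. Your argument for it fails because $Z(S) \not\le [S,S]$ does not guarantee a central subgroup of order $p$ disjoint from $[S,S]$; every element of order $p$ in $Z(S)$ may still lie in $[S,S]$, as happens in that example. Fortunately you never use this claim downstream --- your transfer only needs $G_0 \in \mathcal{F}$ to be $\B_0$-minimal, which is the defining property of a $\B_0$-minimal family --- so the error is cosmetic, but the remark should be removed.
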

\begin{proof}
Let $G$ be a $\B_0$-minimal member of the given isoclinism family and $H \simeq G$ a group that is not a $\B_0$-minimal group. Since $\B_0(H) \cong \B_0(G) \neq 0$, the group $H$ has either a subgroup or a quotient, say $K$, with a nontrivial Bogomolov multiplier. By \cite{Hal40}, the subgroups and quotients of $H$ belong to the same isoclinism families as the subgroups and quotients of $G$. It follows from $\B_0$-minimality of $G$ that the group $K$ must be isoclinic to $H$. As $|K| < |H|$, repeating the process with $K$ instead of $H$ yields a section $S$ of $H$ that is $\B_0$-minimal and isoclinic to $H$. In particular, the stem groups in a $\B_0$-minimal family must be $\B_0$-minimal, since they are groups of minimal order in the family.
\end{proof}

Note also that not all $\B_0$-minimal groups in a given family need be stem, as the following example shows.

\begin{example}
\label{e:256}
Let $G$ be the group generated by elements $g_i$ with $1 \leq i \leq 8$, subject to the following relations:
$g_1^2 = g_5$,
$g_2^2 = g_3^2 = 1$,
$g_4^2 = g_6$,
$g_5^2 = g_7$,
$g_6^2 = 1$,
$g_7^2 = g_8$,
$g_8^2 = 1$,
$[g_2, g_1] = g_4$,
$[g_3, g_1] = g_8$,
$[g_3, g_2] = g_6  g_8$,
$[g_4, g_1] = g_6$,
$[g_4, g_2] = g_6$, and 
$[g_i,g_j] = 1$ for other $i>j$.
Using the algorithm developed in \cite{Mor11}, we see that $G$ is a $\B_0$-minimal group. Its Bogomolov multiplier is generated by the element $(g_3 \curlywedge g_2)(g_4 \curlywedge g_2)(g_3 \curlywedge g_1)$ of order $2$ in $G \curlywedge G$. Since $g_7$ belongs to the center $Z(G)$ but not to the derived subgroup $[G,G]$, the group $G$ is not a stem group. In fact, $G$ is isoclinic to the group given in Example \ref{e:64}, both the isoclinism isomorphisms stemming from interchanging the generators $g_2$ and $g_3$.
\end{example}

Applying standard homological arguments, we quickly observe that $\B_0$-minimal groups
are $p$-groups:
\begin{proposition}
\label{p:pgroups}
A $\B_0$-minimal group is a $p$-group.
\end{proposition}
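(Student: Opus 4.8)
The plan is to show that a $\B_0$-minimal group $G$ must be a $p$-group for some prime $p$. The natural strategy is to exploit the defining minimality: $\B_0(G) \neq 0$, yet every proper section has trivial Bogomolov multiplier. I would first reduce to the case where $G$ is nilpotent, and then to a single prime.

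\medskip\noindent\textbf{Reducing to nilpotent groups.}
The key homological fact I would use is that the Bogomolov multiplier, like the Schur multiplier, behaves well with respect to Sylow subgroups. Specifically, for a finite group $G$ and a prime $p$, the restriction-corestriction argument shows that the $p$-primary component of $\B_0(G)$ embeds into $\B_0(P)$ for a Sylow $p$-subgroup $P$ of $G$ (the composite $\cor \circ \res$ is multiplication by the index $[G:P]$, which is prime to $p$ and hence invertible on the $p$-part). Since $\B_0(G) \neq 0$, it has a nontrivial $p$-primary component for some prime $p$, and therefore $\B_0(P) \neq 0$ for the corresponding Sylow subgroup $P$. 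Now $P$ is a subgroup of $G$. If $P$ were proper, then $\B_0$-minimality would force $\B_0(P) = 0$, a contradiction. Hence $P = G$, which means $G$ is itself a $p$-group.

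\medskip\noindent\textbf{Carrying out the argument.}
First I would invoke the transfer/restriction machinery to establish the embedding of the $p$-part of $\B_0(G)$ into $\B_0(P)$; this is the analogue for $\B_0$ of the standard Schur multiplier result, and should either be cited or verified directly via the combinatorial $G \curlywedge G$ description, noting that $\M_0$ is respected by restriction and corestriction since these maps send commuting pairs to commuting pairs. Second, from $\B_0(G) \neq 0$ I would extract a prime $p$ dividing $|\B_0(G)|$ and conclude $\B_0(P) \neq 0$ for a Sylow $p$-subgroup $P$. Third, I would apply $\B_0$-minimality to the subgroup $P$: since proper subgroups of $G$ have trivial Bogomolov multiplier, $P$ cannot be proper, forcing $G = P$ to be a $p$-group.

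\medskip\noindent\textbf{Main obstacle.}
The step I expect to require the most care is justifying that the $p$-primary part of $\B_0(G)$ injects into $\B_0(P)$. For the full Schur multiplier this is classical, but $\B_0$ is the quotient $\M(G)/\M_0(G)$, so one must check that restriction and corestriction maps descend to $\B_0$, i.e. that they carry $\M_0(G)$ into $\M_0(P)$ and vice versa. Using Bogomolov's cohomological description of $\B_0(G)$ as the intersection of kernels of restriction maps to abelian subgroups, the compatibility of restriction and corestriction on $\HH^2(-,\QZ)$ with these intersections makes the transfer argument go through, giving that $\cor\circ\res$ is multiplication by $[G:P]$ on $\B_0(G)$ as well. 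Once this compatibility is in place, the rest of the argument is a direct and standard consequence of the minimality hypothesis.
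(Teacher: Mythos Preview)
Your proposal is correct and follows essentially the same route as the paper: the paper simply cites \cite[Lemma 2.6]{Bog04} for the fact that the $p$-part of $\B_0(G)$ embeds into $\B_0(S)$ for a Sylow $p$-subgroup $S$, and then invokes $\B_0$-minimality exactly as you do. Your restriction--corestriction sketch is precisely the content of that cited lemma, so the only difference is that you unpack what the paper quotes.
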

\begin{proof}
Let $G$ be a $\B_0$-minimal group. Suppose $p$ is a prime dividing the order of $G$. By \cite[Lemma 2.6]{Bog04}, the $p$-part of $\B_0(G)$ embeds into $\B_0(S)$, where $S$ is a Sylow $p$-subgroup of $G$. It thus follows from $\B_0$-minimality that $G$ is a $p$-group.
\end{proof}

Hence $\B_0$-minimal families are determined by their stem $p$-groups. Making use of recent results on Bogomolov multipliers of $p$-groups of small orders \cite{Hos11,Hos12,Chu09,Chu08,Che13,ARXIVREF}, we determine the $\B_0$-minimal families of rank at most $6$ for odd primes $p$, and those of rank at most $7$ for $p = 2$. In stating the proposition, the classifications \cite{Jam80,Jam90} are used.

\begin{proposition}
\label{p:smallb0mins}
The $\B_0$-minimal isoclinism families of $p$-groups with $p$ an odd prime and of rank at most $6$ are precisely the families $\Phi_i$ with $i \in \{ 10, 18, 20, 21, 36 \}$ of \cite{Jam80}.
The $\B_0$-minimal isoclinism families of $2$-groups of rank at most $7$ are precisely the families $\Phi_i$ with $i \in \{ 16, 30, 31, 37, 39, 80 \}$ of \cite{Jam90}.
\end{proposition}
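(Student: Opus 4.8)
The plan is a finite verification across isoclinism families, organized so that the structural results already established do most of the work. By the isoclinism invariance of the Bogomolov multiplier \cite{Mor14} together with Proposition \ref{p:families}, a family is $\B_0$-minimal exactly when its stem groups are $\B_0$-minimal; hence it suffices to examine one stem group per family of the relevant rank. The necessary conditions supplied by Theorem \ref{t:frattini} and Corollary \ref{c:exponentB0}---a $\B_0$-minimal group is generated by at most four elements, has abelian Frattini subgroup, and has Bogomolov multiplier of prime exponent---allow me to discard at the outset every family whose stem groups violate them, which removes the overwhelming majority of cases.

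First I would enumerate, via the classifications \cite{Jam80} for odd $p$ and \cite{Jam90} for $p = 2$, all isoclinism families of rank at most $6$, respectively at most $7$, together with a convenient stem group for each. Since every $p$-group of order at most $p^5$ has trivial Bogomolov multiplier \cite{Mor12p5}, any $\B_0$-minimal family has rank at least $6$; thus for odd $p$ the candidates have rank exactly $6$, and for $p = 2$ rank $6$ or $7$.

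The key simplification is that in the minimal rank the $\B_0$-minimality condition is automatic. If $G$ is a stem group of rank $6$, then every proper subgroup and every proper quotient of $G$ has order at most $p^5$ and hence trivial Bogomolov multiplier, so $G$ is $\B_0$-minimal as soon as $\B_0(G) \neq 0$. Consequently the rank-$6$ $\B_0$-minimal families are precisely the families of order $p^6$ with nontrivial Bogomolov multiplier, and I would read these off the tabulated computations in \cite{Hos11, Hos12, Chu08, Chu09, Che13, ARXIVREF}, cross-checking with the commutator-based algorithm of \cite{Mor11} on the exterior square $G \curlywedge G$ where an explicit confirmation is wanted. This yields the list $\Phi_i$ with $i \in \{10, 18, 20, 21, 36\}$ for odd $p$, and the rank-$6$ members of the list for $p = 2$.

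Only the rank-$7$ families for $p = 2$ require a genuine minimality check, and this is where the main effort lies. Here a proper section of a stem group $G$ of order $2^7$ may have order $2^6$ and therefore nontrivial Bogomolov multiplier, so for each order-$2^7$ family with $\B_0 \neq 0$ I must verify that no proper subgroup and no proper quotient of $G$ lies in one of the rank-$6$ families already found to have nontrivial Bogomolov multiplier. This is a finite but laborious search through the sections of each candidate, carried out with the algorithm of \cite{Mor11}, and the principal obstacle is precisely this bookkeeping, compounded by the need to match each surviving stem group with its correct James label. Completeness of the final list rests on the exhaustiveness of the cited small-order computations, while the structural constraints from Theorem \ref{t:frattini} are what keep the number of sections to be inspected manageable.
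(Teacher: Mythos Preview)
There is a genuine error in your argument for odd primes. You assert that ``every $p$-group of order at most $p^5$ has trivial Bogomolov multiplier \cite{Mor12p5}'', and from this conclude that any $\B_0$-minimal family has rank at least $6$ and that the minimality condition is automatic at rank $6$. This is false for odd $p$: the family $\Phi_{10}$ of \cite{Jam80} has rank $5$ and nontrivial Bogomolov multiplier, as established in \cite{Hos11,Hos12,Mor12p5}. Consequently $\Phi_{10}$ is itself a $\B_0$-minimal family of rank $5$, and a stem group of rank $6$ may well have a proper section of order $p^5$ lying in $\Phi_{10}$. This is not hypothetical: the rank-$6$ families with nontrivial multiplier are $\Phi_i$ for $i \in \{18,20,21,36,38,39\}$ by \cite{Che13}, and the paper shows that the central quotients of stem groups in $\Phi_{38}$ and $\Phi_{39}$ belong to $\Phi_{10}$, so these two families are \emph{not} $\B_0$-minimal. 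Your argument, taken at face value, would both omit $\Phi_{10}$ from the list and wrongly include $\Phi_{38}$ and $\Phi_{39}$.

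The fix is exactly what the paper does: treat ranks $5$ and $6$ separately for odd $p$, and at rank $6$ carry out the same kind of section-by-section minimality check you already planned for the rank-$7$ families when $p=2$. In practice this is short: for $\Phi_{18}$, $\Phi_{20}$, $\Phi_{21}$ one observes that the nilpotency class is too small for any section to land in $\Phi_{10}$; for $\Phi_{36}$ one checks that the unique central quotient lies in $\Phi_9$; and for $\Phi_{38}$, $\Phi_{39}$ one exhibits the offending quotient in $\Phi_{10}$. Your treatment of the $p=2$ case is essentially correct, since there the minimal order with nontrivial multiplier really is $2^6$ \cite{Chu08,Chu09}, so rank-$6$ minimality \emph{is} automatic and only rank $7$ needs the section check.
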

\begin{proof}
Suppose first that $p$ is odd. If the rank of the family is at most $4$, we have $\B_0(G) = 0$ by \cite{Bog88}. Next, if the rank equals $5$, stem groups of the family have nontrivial Bogomolov multipliers if and only if they belong to the family $\Phi_{10}$ by \cite{Hos11,Hos12,Mor12p5}. Further, if the rank is $6$, then it follows from \cite{Che13} that stem groups of the family have nontrivial Bogomolov multipliers if and only if they belong to one of the isoclinism families $\Phi_i$ with $i \in \{ 18, 20, 21, 36, 38, 39 \}$. Note that the families $\Phi_{38}$ and $\Phi_{39}$ only exist when $p > 3$. The groups in the families $\Phi_{18}$, $\Phi_{20}$ and $\Phi_{21}$ are of nilpotency class at most $3$, so none of their proper quotients and subgroups can belong to the isoclinism family $\Phi_{10}$. Hence these families are indeed $\B_0$-minimal. Central quotients of stem groups in the families $\Phi_{38}$ and $\Phi_{39}$ belong to the family $\Phi_{10}$, so these groups are not $\B_0$-minimal. On the other hand, the center of the stem groups of the family $\Phi_{36}$ is of order $p$ and the central quotients of these groups belong to the family $\Phi_9$, so this family is $\B_0$-minimal.

Now let $p = 2$. It is shown in \cite{Chu08,Chu09} that the groups of minimal order having nontrivial Bogomolov multipliers are exactly the groups forming the stem of the isoclinism family $\Gamma_{16}$ of \cite{Hal64}, so this family is $\B_0$-minimal. In the notation of \cite{Jam90}, it corresponds to $\Phi_{16}$. Now consider the isoclinism families of rank $7$. Their Bogomolov multipliers have been determined in \cite{ARXIVREF}. The families whose multipliers are nontrivial are precisely the families $\Phi_i$ with $i \in \{ 30, 31, 37, 39, 43, 58, 60, 80, 106, 114 \}$. It remains to filter out the $\B_0$-minimal families from this list. Making use of the presentations of representative groups of these families as given in \cite{ARXIVREF}, it is straightforward that stem groups of the families $\Phi_{43}$, $\Phi_{106}$ and $\Phi_{114}$ contain a maximal subgroup belonging to the family $\Phi_{16}$, which implies that these families are not $\B_0$-minimal. Similarly, stem groups of the families $\Phi_{58}$ and $\Phi_{60}$ possess maximal quotient groups belonging to $\Phi_{16}$, so these families are also not $\B_0$-minimal. On the other hand, it is readily verified that stem groups of the families $\Phi_i$ with $i \in \{ 30, 31, 37, 39, 80 \}$ have no maximal subgroups or quotients belonging to the family $\Phi_{16}$, implying that these families are $\B_0$-minimal.
\end{proof}

We now turn our attention to the structure of general $\B_0$-minimal groups. The upcoming lemma is of key importance in our approach, and will be used repeatedly throughout the paper.

\begin{lemma}
\label{l:generators}
Let $G$ be a $\B_0$-minimal $p$-group and $z = \prod_{i \in I} [x_i, y_i]$ a central element of order $p$ in $G$. Then there exist elements $a,b \in G$ satisfying 
\[	
\begin{aligned}
\textstyle
G = \langle a,b,x_i,y_i \; ; \; i \in I \rangle, \quad
[a,b] = z, \quad
a \curlywedge b \neq \prod_{i \in I} (x_i \curlywedge y_i).
\end{aligned}
\]
\end{lemma}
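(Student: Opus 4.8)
The plan is to reformulate the desired inequality homologically. Writing $w = \prod_{i\in I}(x_i\curlywedge y_i) \in G\curlywedge G$, the commutator map sends both $a\curlywedge b$ and $w$ to $z$; hence their quotient $(a\curlywedge b)w^{-1}$ automatically lies in $\B_0(G)$, and the condition $a\curlywedge b\ne w$ is precisely the requirement that this ``defect'' be a nontrivial element of the Bogomolov multiplier. So I must exhibit a pair $(a,b)$ with $[a,b]=z$ realizing a nonzero defect, and separately secure the generation condition. I would first produce such a pair disregarding generation, and then show that generation comes for free.

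For the existence, pass to $\bar G = G/\langle z\rangle$. As $\langle z\rangle$ is a nontrivial central normal subgroup, $\B_0$-minimality of $G$ gives $\B_0(\bar G)=0$. The natural surjection $\pi\colon G\curlywedge G\to\bar G\curlywedge\bar G$ commutes with the commutator maps, so any $\eta\in\B_0(G)$ has $\pi(\eta)\in\B_0(\bar G)=0$; using the standard exact sequence describing $\curlywedge$ under a central quotient, $\ker\pi$ is generated by the symbols $u\curlywedge v$ with $[u,v]\in\langle z\rangle$, whence every element of $\B_0(G)$ is a product of such symbols. Now fix $1\ne\eta\in\B_0(G)$ and write $\eta=\prod_j(u_j\curlywedge v_j)$ with $[u_j,v_j]=z^{k_j}$; discarding the factors with $k_j\equiv 0\pmod p$ (these satisfy $[u_j,v_j]=1$, so $u_j\curlywedge v_j=1$), I may assume each $k_j$ is invertible modulo $p$. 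Suppose, for contradiction, that $a\curlywedge b=w$ for every pair with $[a,b]=z$. Choosing $m_j$ with $m_jk_j\equiv 1\pmod p$ and using that a central commutator makes the exterior symbol behave multiplicatively, namely $u\curlywedge v^{m}=(u\curlywedge v)^{m}$ whenever $[u,v]$ is central, the pair $(u_j,v_j^{m_j})$ has $[u_j,v_j^{m_j}]=z$, so by assumption $u_j\curlywedge v_j^{m_j}=w$, i.e. $u_j\curlywedge v_j=w^{k_j}$. Substituting, $\eta=\prod_j w^{k_j}=w^{\sum_j k_j}$; but $\eta\in\B_0(G)$ forces $z^{\sum_j k_j}=1$, so $p\mid\sum_j k_j$ and $\eta=1$, a contradiction. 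Hence a pair $(a,b)$ with $[a,b]=z$ and $a\curlywedge b\ne w$ exists.

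It remains to upgrade such a pair to a generating one, and this is where $\B_0$-minimality is used a second time. Put $H=\langle a,b,x_i,y_i\;;\;i\in I\rangle$ and suppose $H\ne G$. Then $\B_0(H)=0$. The element $(a\curlywedge_H b)\prod_i(x_i\curlywedge_H y_i)^{-1}$ of $H\curlywedge H$ maps to $zz^{-1}=1$ under the commutator map of $H$, hence lies in $\B_0(H)=0$ and is therefore trivial, giving $a\curlywedge_H b=\prod_i(x_i\curlywedge_H y_i)$ in $H\curlywedge H$. Pushing this equality forward along the functorial map $H\curlywedge H\to G\curlywedge G$ yields $a\curlywedge b=w$ in $G\curlywedge G$, contradicting the choice of $(a,b)$. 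Thus $H=G$, and $(a,b)$ is the required pair.

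I expect the main obstacle to be the bookkeeping in the existence step: justifying the description of $\ker\pi$ (equivalently, that every element of $\B_0(G)$ is a product of symbols $u\curlywedge v$ with $[u,v]\in\langle z\rangle$) via the central-quotient behaviour of the exterior square, together with verifying the multiplicativity identity $u\curlywedge v^{m}=(u\curlywedge v)^{m}$ for central $[u,v]$ from the defining relations of $\curlywedge$. Both hinge on $z$ having order exactly $p$, which is what allows the exponent sum $\sum_j k_j$ to be collapsed modulo $p$; the generation step, by contrast, is essentially immediate once minimality is invoked.
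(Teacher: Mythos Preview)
Your argument is correct and matches the paper's proof in substance: both pass to $G/\langle z\rangle$, use $\B_0$-minimality to place a nontrivial $\B_0(G)$-element in the subgroup $J=\langle u\curlywedge v : [u,v]\in\langle z\rangle\rangle$ (this is precisely \cite[Proposition 4.1]{Mor11} as cited in the paper), and then exploit that each generator of $J$ has order $p$ to force a contradiction, with the generation clause following from $\B_0$-minimality applied to the proper subgroup $H$. The only cosmetic difference is organizational---the paper phrases the contradiction as ``$J$ is not cyclic'' and then extracts the pair, whereas you run the contrapositive directly; one small step you leave implicit is that under your contradiction hypothesis $w$ equals some $a\curlywedge b$ with $[a,b]=z$ central of order $p$, whence $w^p=1$, which is what lets you collapse $w^{\sum k_j}$ to $1$.
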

\begin{proof}
Let $w$ be a nontrivial element of $\B_0(G)$ and put $N = \langle z \rangle$. The canonical projection $G \to G/N$ induces a homomorphism  $G \curlywedge G \to G/N \curlywedge G/N \cong (G \curlywedge G)/J$, where $J = \langle a \curlywedge b \mid  [a,b] \in N \rangle$ by \cite[Proposition 4.1]{Mor11}. By $\B_0$-minimality of $G$, the element $w$ is in the kernel of this homomorphism, so it must belong to $J$. Suppose first that $J$ is cyclic. Then there exist elements $ x,  y \in G$ with $[x, y] = z$ and $J = \langle  x \curlywedge  y \rangle$. Since $w \in J$, we have $w = ( x \curlywedge  y)^n$ for some integer $n$. Applying the commutator mapping, we obtain $1 = [x, y]^n = z^n$, so $n$ must be divisible by $p$. But then $w = (x \curlywedge y)^n =  x^n \curlywedge  y = 1$, since $z$ is central in $G$. This shows that $J$ cannot be cyclic. Hence there exist elements $\tilde a,b \in G$ with $\prod_{i \in I} (x_i \curlywedge y_i) \notin \langle \tilde a \curlywedge b  \rangle$ and $1 \neq [\tilde a,b] \in N$. The latter implies $[\tilde a,b] = z^m$ for some integer $m$ coprime to $p$. Let $\mu$ be the multiplicative inverse of $m$ modulo $p$ and put $a = {\tilde a}^\mu$. The product $\prod_{i \in I}(x_i \curlywedge y_i) (a \curlywedge b)^{-1}$ is then a nontrivial element of $\B_0(G)$, since $[a,b]= z^{m\mu} = z$. By $\B_0$-minimality of $G$, the subgroup generated by $a, b, x_i, y_i$, $i \in I$, must equal the whole of $G$. 
\end{proof}

The above proof immediately implies the following result which can be compared with \cite[Theorem 4.6]{Bog88}.

\begin{corollary}
\label{c:exponentB0}
The Bogomolov multiplier of a $\B_0$-minimal group is of prime exponent.
\end{corollary}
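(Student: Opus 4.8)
The plan is to read off the exponent bound directly from the machinery assembled in the proof of Lemma \ref{l:generators}. By Proposition \ref{p:pgroups} the group $G$ is a $p$-group, and since $\B_0(G)\neq 0$ it is nonabelian, so the nontrivial normal subgroup $[G,G]$ meets the centre: $[G,G]\cap Z(G)\neq 1$. I would fix an element $z$ of order $p$ in $[G,G]\cap Z(G)$; being in the derived subgroup it is a product of commutators, so $z$ is of exactly the type treated in Lemma \ref{l:generators}. Putting $N=\langle z\rangle$, $\B_0$-minimality gives $\B_0(G/N)=0$, and the induced map $G\curlywedge G\to G/N\curlywedge G/N$ carries $\B_0(G)$ into $\B_0(G/N)=0$. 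Hence, exactly as in the proof of Lemma \ref{l:generators}, the whole of $\B_0(G)$ lies in the kernel $J=\langle a\curlywedge b\mid [a,b]\in N\rangle$ furnished by \cite[Proposition 4.1]{Mor11}.

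The heart of the argument is to show that $J$ is elementary abelian. Each generator $a\curlywedge b$ of $J$ satisfies $[a,b]\in N\le Z(G)$. Since conjugation in $G\curlywedge G$ is governed by the induced action of commutator values through $(x\curlywedge y)^{g}=x^g\curlywedge y^g$, a central value $[a,b]$ acts trivially, so each such generator is central in $G\curlywedge G$ and therefore $J$ is abelian. Moreover, for any generator we may assume $[a,b]$ has order dividing $p$ (if $[a,b]=1$ the generator is already trivial), and the commutator identity $x^n\curlywedge y=(x\curlywedge y)^n$ valid for central $[x,y]$ yields $(a\curlywedge b)^p=a^p\curlywedge b$; as $[a^p,b]=[a,b]^p=1$, the element $a^p\curlywedge b$ is trivial in $G\curlywedge G$. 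This is precisely the computation ``$(x\curlywedge y)^n=x^n\curlywedge y=1$'' used in Lemma \ref{l:generators}, now read as the statement that every generator of $J$ has order dividing $p$. An abelian group generated by elements of order dividing $p$ has exponent dividing $p$, so $J$, and with it $\B_0(G)\le J$, is of exponent dividing $p$; since $\B_0(G)\neq 0$ the exponent is exactly $p$.

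The one point requiring care — and the step I would flag as the main obstacle — is the passage from ``the generators of $J$ have order dividing $p$'' to ``$J$ has exponent $p$'', since a group generated by involutions-like elements of order $p$ need not itself have exponent $p$. This is exactly what the centrality observation resolves: because every generator $a\curlywedge b$ of $J$ has central commutator value $[a,b]$, it lies in the centre of $G\curlywedge G$, forcing $J$ to be abelian and making the elementary-abelian conclusion go through. All the commutator-calculus identities invoked, namely $x^n\curlywedge y=(x\curlywedge y)^n$ and $(x\curlywedge y)^{g}=x^g\curlywedge y^g$ for central values, are the same ones underlying Lemma \ref{l:generators}, which is why the corollary is indeed an immediate consequence of that proof.
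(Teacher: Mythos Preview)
Your proof is correct and follows the same approach as the paper: both use $\B_0$-minimality to place $\B_0(G)$ inside $J=\langle a\curlywedge b\mid [a,b]\in\langle z\rangle\rangle$ and then argue that $J$ has exponent $p$. You make explicit the step the paper leaves implicit, namely that $J$ is central in $G\curlywedge G$ (via the crossed-module/Peiffer identity, since each generator has central commutator image) and hence abelian, so that the order-$p$ bound on the generators passes to all of $J$.
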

\begin{proof}
Let $G$ be a $\B_0$-minimal $p$-group and $w$ a nontrivial element of $\B_0(G)$. For any central element $z$ in $G$ of order $p$, we have $w \in J_z = \langle a \curlywedge b \mid [a,b] \in \langle z \rangle \rangle$ by $\B_0$-minimality, thus $w^p = 1$, as required.
\end{proof}

We apply Lemma \ref{l:generators} to some special central elements of prime order in a $\B_0$-minimal group. In this way, some severe restrictions on the structure of $\B_0$-minimal groups are obtained. Recall that the {\it Frattini rank} of a group $G$ is the cardinality of the smallest generating set of $G$.

\begin{theorem}
\label{t:frattini}
A $\B_0$-minimal group has an abelian Frattini subgroup and is of Frattini rank at most $4$. Moreover, when the group is of nilpotency class at least $3$, it is of Frattini rank at most $3$.
\end{theorem}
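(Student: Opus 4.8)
The plan is to exploit Lemma \ref{l:generators} by feeding it central elements of order $p$ written as commutator products that involve as few, and as ``deep'', factors as possible. The two rank bounds come out almost immediately; the abelian Frattini assertion is where the real work lies.

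First I would bound the Frattini rank. Let $G$ be $\B_0$-minimal of nilpotency class $c$. Since $\B_0(G)\neq 0$, the group $G$ is nonabelian, so $c\geq 2$ and the last term $\gamma_c(G)\neq 1$ is central. As $\gamma_c(G)=[\gamma_{c-1}(G),G]$, there exist $u\in\gamma_{c-1}(G)$ and $v\in G$ with $[u,v]\neq 1$. This commutator is central, so $[u,v]^n=[u^n,v]$ for every $n$; taking $n=p^{k-1}$, where $p^k$ is the order of $[u,v]$, produces a \emph{single} commutator $z=[u^{p^{k-1}},v]$ that is central of order exactly $p$. Applying Lemma \ref{l:generators} with $I$ a singleton, $x_1=u^{p^{k-1}}$ and $y_1=v$, yields elements $a,b$ with $G=\langle a,b,u^{p^{k-1}},v\rangle$, whence the Frattini rank of $G$ is at most $4$.

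For the sharper bound under the hypothesis $c\geq 3$ I would reuse the same element. Now $c-1\geq 2$, so $u^{p^{k-1}}\in\gamma_{c-1}(G)\subseteq[G,G]\subseteq\Frat(G)$. Since a Frattini element is a non-generator, it may be deleted from the generating set produced above, leaving $G=\langle a,b,v\rangle$ and hence Frattini rank at most $3$. Note that for $c=2$ this refinement is unavailable, since there $u$ ranges over all of $\gamma_1(G)=G$, which is exactly why the $4$-generator bound cannot be improved in class $2$.

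The remaining, and I expect hardest, assertion is that $\Frat(G)$ is abelian. I would argue by contradiction: suppose $D:=[\Frat(G),\Frat(G)]\neq 1$. As $\Frat(G)$ is characteristic, $D$ is a nontrivial normal subgroup of the $p$-group $G$, so $D\cap Z(G)\neq 1$; pick $z\in D\cap Z(G)$ of order $p$. Being an element of $D$, it has the form $z=\prod_{i\in I}[f_i,g_i]$ with all $f_i,g_i\in\Frat(G)$. Feeding this representation into Lemma \ref{l:generators} gives $a,b$ with $G=\langle a,b,f_i,g_i\;;\;i\in I\rangle$; but every $f_i$ and $g_i$ lies in $\Frat(G)$ and is therefore redundant, forcing $G=\langle a,b\rangle$ regardless of the size of $I$. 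Thus a nonabelian Frattini subgroup would make $G$ two-generated, and the proof is completed by ruling this out.

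The crux, and the main obstacle, is precisely this last step: a two-generated $p$-group has trivial Bogomolov multiplier, contradicting $\B_0$-minimality. I want to stress that this does \emph{not} follow formally from Lemma \ref{l:generators} in the way the cyclic-kernel dichotomy did there, because for a two-generated $G$ the group $G\curlywedge G$ surjects onto the possibly large derived subgroup $[G,G]$ and so need not be cyclic; the kernel-triviality argument from Lemma \ref{l:generators} therefore cannot simply be transplanted. I would accordingly establish the vanishing of $\B_0$ for two-generated $p$-groups as a separate input — either by citing the known result to this effect, or by a direct analysis of $G\curlywedge G$ for $G=\langle a,b\rangle$ showing that the commutator map $G\curlywedge G\to[G,G]$ is injective. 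Granting this, the contradiction stands, $\Frat(G)$ must be abelian, and all three claims of the theorem are proved.
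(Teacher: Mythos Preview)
Your treatment of the two rank bounds is correct and matches the paper's argument essentially verbatim.

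The gap is in the abelian Frattini claim. Your plan is to deduce $G=\langle a,b\rangle$ and then finish by invoking that a two-generated $p$-group has trivial $\B_0$. That assertion is \emph{false}: among the $\B_0$-minimal families listed in Proposition~\ref{p:smallb0mins} for odd $p$, the family $\Phi_{36}$ (and also $\Phi_{38}$, $\Phi_{39}$ before filtering) consists of $2$-generated groups of order $p^6$ and maximal class with nontrivial Bogomolov multiplier. This is also why the proof of Theorem~\ref{t:cp-p} treats the case $|G:\Phi(G)|=p^2$ at length rather than dismissing it. So the ``known result'' you propose to cite does not exist, and the direct analysis you offer as an alternative cannot succeed either.

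What you are missing is that Lemma~\ref{l:generators} gives you more than a generating set: it also records that $[a,b]=z$. Since $z$ was chosen central of order $p$, once $G=\langle a,b\rangle$ with $[a,b]$ central you get $[G,G]=\langle[a,b]\rangle\cong\Z/p\Z$; in particular $G$ has class~$2$ and $[g,h]^p=1$ for all $g,h$, whence $[g^p,h]=1$ and $G^p\leq Z(G)$. Thus $\Phi(G)=G^p[G,G]\leq Z(G)$, contradicting the assumption that $\Phi(G)$ is nonabelian. This is exactly the paper's argument, and it uses precisely the piece of Lemma~\ref{l:generators} that your proposal discards.
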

\begin{proof}
Let $G$ be a $\B_0$-minimal group and $\Phi(G)$ its Frattini subgroup. Suppose that $[\Phi(G), \Phi(G)] \neq 1$. Since $G$ is a $p$-group, we have $[\Phi(G), \Phi(G)] \cap Z(G) \neq 1$, so there exists a central element $z$ of order $p$ in $[\Phi(G), \Phi(G)]$. Expand it as $z = \prod_i [x_i, y_i]$ with $x_i, y_i \in \Phi(G)$. By Lemma \ref{l:generators}, there exist $a,b \in G$ so that the group $G$ may be generated by the elements $a,b,x_i,y_i$, $i \in I$. Since the generators $x_i, y_i$ belong to $\Phi(G)$, they may be omitted, and so $G = \langle a,b \rangle$. As the commutator $[a,b]$ is central in $G$, we have $[G,G] = \langle [a,b] \rangle \cong \Z/p\Z$. It follows from here that the exponent of $G/Z(G)$ equals $p$, so we finally have $\Phi(G) = G^p [G,G] \leq Z(G)$, a contradiction. This shows that the Frattini subgroup of $G$ is indeed abelian. To show that the group $G$ is of Frattini rank at most $4$, pick any $x \in \gamma_{c-1}(G)$ and let $z = [x,y] \in \gamma_{c}(G)$ be an element of order $p$ in $G$. By Lemma \ref{l:generators}, there exist $a,b \in G$ so that the group $G$ may be generated by $a,b,x,y$. Hence $G$ is of Frattini rank at most $4$. When the nilpotency class of $G$ is at least $3$, we have $x \in \gamma_{c-1}(G) \leq [G,G]$, so the element $x$ is a nongenerator of $G$. This implies that $G$ is of Frattini rank at most $3$ in this case.
\end{proof}

Note that in particular, Theorem \ref{t:frattini} implies that a $\B_0$-minimal group is metabelian, as was already shown in \cite[Theorem 4.6]{Bog88}.

\begin{corollary}
\label{c:center}
The exponent of the center of a stem $\B_0$-minimal group divides $p^2$.
\end{corollary}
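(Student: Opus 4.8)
The plan is to assume the contrary and run a minimal‑counterexample style argument built on Lemma \ref{l:generators}, the stem hypothesis, and $\B_0$‑minimality. Let $G$ be a stem $\B_0$‑minimal $p$‑group, so that $Z(G) \le [G,G]$, and let $u \in Z(G)$ have order $p^k$; the goal is to show $k \le 2$, so suppose for contradiction that $k \ge 3$. Since $u \in Z(G) \le [G,G]$, the element $z := u^{p^{k-1}}$ is central of order $p$ and lies in $[G,G]$, hence can be written as a product of commutators $z = \prod_{i \in I}[x_i,y_i]$. Applying Lemma \ref{l:generators} to this $z$ produces elements $a,b \in G$ with $G = \langle a,b,x_i,y_i \; ; \; i \in I\rangle$, with $[a,b] = z$, and such that $w := \bigl(\prod_{i \in I}(x_i \curlywedge y_i)\bigr)(a \curlywedge b)^{-1}$ is a nontrivial element of $\B_0(G)$; by Corollary \ref{c:exponentB0} it has order $p$.

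Next I would pass to a proper central quotient tailored to the assumption $k \ge 3$. Set $N := \langle u^{p^2}\rangle$, which is central, nontrivial, and proper, and note that $z = u^{p^{k-1}} \in N$ precisely because $k - 1 \ge 2$. By $\B_0$‑minimality of $G$ we have $\B_0(G/N) = 0$, and by \cite[Proposition 4.1]{Mor11} the kernel of the induced map $G \curlywedge G \to (G/N) \curlywedge (G/N)$ is $J = \langle c \curlywedge d \mid [c,d] \in N\rangle$. Since $w$ maps into the trivial group $\B_0(G/N)$, it follows that $w \in J$. The remaining task is to show that every generator $c \curlywedge d$ of $J$ contributes trivially in $\B_0(G)$, which would force $w = 1$ and yield the desired contradiction, so that $k \le 2$ and $\exp Z(G)$ divides $p^2$.

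The crux, and the step I expect to be the main obstacle, is this last reduction inside $G \curlywedge G$. The point is that if $[c,d] \in N = \langle u^{p^2}\rangle$, then $[c,d] = (u^{pm})^p$ for a central element $u^{pm}$, and one wants to conclude that $c \curlywedge d$ is a $p$‑th power modulo $\M_0(G)$; combined with $\exp \B_0(G) = p$ from Corollary \ref{c:exponentB0}, this makes its image in $\B_0(G)$ trivial. Establishing this requires the exterior‑square identities governing central elements — in particular that $c \curlywedge d$ is unchanged when $d$ is multiplied by a central element, and that $c \curlywedge d^{\,n} = (c \curlywedge d)^n$ whenever $[c,d]$ is central — together with the metabelian commutator calculus available because $G$ is metabelian by Theorem \ref{t:frattini}. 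The delicate matter is to carry out this rewriting uniformly over all generators of $J$, since it is exactly the passage from the square $u^{p^2}$ to genuine $p$‑th‑power structure that separates the admissible exponent $p^2$ from the excluded exponent $p^3$, and managing the higher‑commutator correction terms in the metabelian setting is where the argument must be handled with care.
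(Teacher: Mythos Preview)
Your approach is quite different from the paper's and, as you yourself flag, the crucial final step is not actually carried out. The paper's proof is a two-line argument that bypasses all of the quotient machinery: since $G$ is stem, $Z(G)\le [G,G]$, and by \cite[Proposition~3.12]{Mor11} the centre is generated by \emph{central commutators} $[x,y]$. For such a commutator one has $x^p,y^p\in\Phi(G)$, and $\Phi(G)$ is abelian by Theorem~\ref{t:frattini}, so $[x^p,y^p]=1$; since $[x,y]$ is central this expands to $[x,y]^{p^2}=1$. That is the whole proof.

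Your route, by contrast, has a genuine gap at exactly the point you call the crux. Knowing that $[c,d]\in N=\langle u^{p^2}\rangle$ tells you $[c,d]$ is the $p$-th power of a central element, but there is no mechanism that converts this into $c\curlywedge d$ being a $p$-th power in $G\curlywedge G$: the exterior-square identities you cite let you manipulate $c\curlywedge d^{\,n}$ and shift by central factors, but they do not produce a preimage $v$ with $v^p=c\curlywedge d$. Even granting such a factorisation for each generator, the inference to $w=1$ still fails: writing $w=\prod_j(c_j\curlywedge d_j)^{e_j}=\bigl(\prod_j v_j^{e_j}\bigr)^p$ requires commutativity you do not have, and in any case $\prod_j v_j^{e_j}$ need not lie in $\B_0(G)$, so $\exp\B_0(G)=p$ cannot be applied to it. In short, the reduction you propose is circular---it amounts to showing $J\cap\B_0(G)=0$, which is at least as hard as the original statement. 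The missing idea is precisely the one the paper uses: exploit the abelianness of $\Phi(G)$ directly on a generating set of central commutators, rather than trying to control $J$ from below.
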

\begin{proof}
Let $G$ be a stem $\B_0$-minimal group. Then $Z(G) \leq [G,G]$, and it follows from \cite[Proposition 3.12]{Mor11} that $Z(G)$ may be generated by central commutators. For any $x,y \in G$ with $[x,y] \in Z(G)$, we have $[x^p, y^p] = 1$ by Theorem \ref{t:frattini}, which reduces to $[x,y]^{p^2} = 1$ as the commutator $[x,y]$ is central in $G$. This completes the proof. 
\end{proof}

Corollary \ref{c:center} does not apply when the $\B_0$-minimal group is not stem. The group given in Example \ref{e:256} is $\B_0$-minimal and its center is isomorphic to $\Z/2\Z \oplus \Z/8\Z$. The exponents of the upper central factors are, however, always bounded by $p$. This follows from the more general succeeding proposition.

\begin{proposition}
\label{p:highercenter}
Let $G$ be a $\B_0$-minimal group. Then $Z_2(G)$ centralizes $\Phi(G)$.
\end{proposition}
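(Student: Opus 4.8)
The plan is to argue by contradiction, using Lemma~\ref{l:generators} to rebuild $G$ on a three-element generating set and then reading off a contradiction with $\B_0$-minimality. So suppose $[Z_2(G),\Phi(G)]\neq 1$. Since $[Z_2(G),G]\leq Z(G)$, the subgroup $[Z_2(G),\Phi(G)]$ is central, nontrivial, and generated by commutators $[u,v]$ with $u\in Z_2(G)$ and $v\in\Phi(G)$; choose one such commutator $[u,v_0]\neq 1$. Replacing $v_0$ by a suitable $p$-power $v=v_0^{p^{k-1}}\in\Phi(G)$ and using that $[u,v_0]$ is central (so $[u,v_0^n]=[u,v_0]^n$), I obtain a single commutator $z=[u,v]$ of order $p$ with $u\in Z_2(G)$ and $v\in\Phi(G)$.

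First I would feed $z=[u,v]$, viewed as a product over a one-element index set, into Lemma~\ref{l:generators}. This yields $a,b\in G$ with $G=\langle a,b,u,v\rangle$ and $[a,b]=z$. As $v\in\Phi(G)$ is a nongenerator it may be discarded, leaving $G=\langle a,b,u\rangle$. Now every pairwise commutator of these three generators is central: $[a,b]=z\in Z(G)$, while $[a,u]$ and $[b,u]$ lie in $Z(G)$ because $u\in Z_2(G)$. A group generated by elements with central pairwise commutators has nilpotency class at most $2$, so $G$ has class at most $2$. In particular, if $G$ has class at least $3$ this is already the desired contradiction.

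It remains to rule out that $G$ has class exactly $2$. Here $[G,G]\leq Z(G)$, the exterior square $G\curlywedge G$ is abelian (central commutators act trivially, so the wedge is biadditive and alternating), and a commuting pair has vanishing wedge. Writing $\Phi(G)=G^p[G,G]$ and expanding $v=a^{pi}b^{pj}u^{pk}c$ with $c\in[G,G]\leq Z(G)$, biadditivity collapses $z=[u,v]$ to $[u,a]^{pi}[u,b]^{pj}$, whence $[a,b]=[u,a]^{pi}[u,b]^{pj}$. I would then absorb this relation into the generators by passing to $a'=au^{-pj}$ and $b'=bu^{pi}$; a short computation with the bilinear commutator gives $[a',b']=1$, and since $u^{-pj},u^{pi}\in\Phi(G)$ one still has $G=\langle a',b',u\rangle$.

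Finally, with $a',b'$ commuting the wedge $a'\curlywedge b'$ is trivial, so $G\curlywedge G$ is generated by the two elements $a'\curlywedge u$ and $b'\curlywedge u$, and I claim the commutator map $G\curlywedge G\to[G,G]$ is then injective. Indeed a kernel element is $(a'\curlywedge u)^m(b'\curlywedge u)^n=(a'^{m}b'^{n})\curlywedge u$ with $[a'^{m}b'^{n},u]=1$, and a wedge of a commuting pair is trivial in $G\curlywedge G$. Hence $\B_0(G)=0$, contradicting $\B_0$-minimality and completing the argument. The main obstacle is precisely this class-$2$ case: the clean Lemma reduction only forces class at most $2$, and the real work lies in the exterior-square bookkeeping, namely selecting the generator adjustment that makes $a',b'$ commute and verifying that the resulting two-generated exterior square injects into $[G,G]$.
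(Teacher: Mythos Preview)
Your proof is correct and, in fact, cleaner than the paper's own argument. Both proofs begin identically: assume $[Z_2(G),\Phi(G)]\neq 1$, produce a commutator $z=[u,v]$ of order $p$ with $u\in Z_2(G)$, $v\in\Phi(G)$, apply Lemma~\ref{l:generators}, and drop the nongenerator $v$ to reach $G=\langle a,b,u\rangle$.

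From there the routes diverge. The paper expands $v$ as a product of $p$-th powers of words in the generators, collects to $[u,v]=[u,a^{p\alpha}b^{p\beta}]$, and then runs a case analysis on the residues of $\alpha,\beta$ modulo $p$, in each branch invoking $\B_0$-minimality again to force $G$ to be $2$-generated with central commutator, hence of class $2$, and then reaching $\Phi(G)\leq Z(G)$ for the contradiction. Your argument bypasses all of this with the single observation that the three pairwise commutators $[a,b],[a,u],[b,u]$ are already central, so $G$ has class at most $2$ immediately. In the class-$2$ case you then make the substitution $a'=au^{-pj}$, $b'=bu^{pi}$ to kill $[a',b']$, which reduces $G\curlywedge G$ to two generators of the form $(\cdot)\curlywedge u$, and the injectivity of the commutator map follows at once from bilinearity and the vanishing of wedges on commuting pairs.

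What your approach buys is a shorter, more conceptual argument that avoids the iterative ``degrade the exponent'' step and the repeated appeals to $\B_0$-minimality in the paper's case analysis. What the paper's approach buys is perhaps a bit less exterior-square bookkeeping at the end, since its contradiction ($\Phi(G)\leq Z(G)$ versus $[x,y]\neq 1$) is immediate once class $2$ is reached with two generators. Both are valid; yours is the more elegant reduction.
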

\begin{proof}
By Proposition \ref{p:pgroups}, the group $G$ is a $p$-group for some prime $p$. Suppose that $[Z_2(G), \Phi(G)] \neq 1$. Then there exists elements $x \in Z_2(G)$ and $y \in \Phi(G)$ with $[x,y] \neq 1$. By replacing $y$ with its proper power, we may assume that the commutator $[x,y]$ is of order $p$. Invoking Lemma \ref{l:generators}, we conclude that there exist elements $a,b \in G$ with $[x,y] = [a,b]$ and $x \curlywedge y \neq a \curlywedge b$. Hence $G = \langle a, b, x \rangle$ by $\B_0$-minimality. As $y \in \Phi(G)$, we have $y = \prod_i w_i^p$ for some elements $w_i \in G$. Since $x \in Z_2(G)$, this implies $[x,y] = \prod_i [x, w_i]^p$ and $x \curlywedge y = \prod_i (x \curlywedge w_i)^p$. Moreover, we may consider the $w_i$'s modulo $[G,G]$, since $x$ commutes with $[G,G]$. Putting $w_i = x^{\gamma_i} a^{\alpha_i} b^{\beta_i}$ for some integers $\alpha_i, \beta_i, \gamma_i$, we have $[x, w_i] = [x, a^{\alpha_i} b^{\beta_i}]$ and similarly for the curly wedge. By collecting the factors, we obtain $[x,y] = [x, a^{p \alpha} b^{p \beta}]$ for some integers $\alpha, \beta$. Suppose first that $p$ divides $\alpha$. Then $[x, a^{p \alpha}] = [x, a^{\alpha}]^p = [x^p, a^{\alpha}] = 1$ by Theorem \ref{t:frattini}. This implies $[x,y] = [x, b^{p \beta}]$. By an analogous argument, the prime $p$ cannot divide $\beta$, since the commutator $[x,y]$ is not trivial. Let $\bar \beta$ be the multiplicative inverse of $\beta$ modulo $p$ and put $\tilde a = a^{\bar \beta}, \tilde b = b^{\beta}$. Then we have $[\tilde a, \tilde b] = [x,y] = [x, \tilde b^p] = [x^p, \tilde b]$ and similarly $\tilde a \curlywedge \tilde b = a \curlywedge b \neq x \curlywedge y = x^p \curlywedge \tilde b$. By $\B_0$-minimality, this implies $G = \langle \tilde a, \tilde b \rangle$ with the commutator $[\tilde a, \tilde b]$ being central of order $p$ in $G$. Hence the group $G$ is of nilpotency class $2$. We now have $[\tilde a^p, b] = [\tilde a, b]^p = 1$ and similarly $[\tilde b^p, a] = 1$, so the Frattini subgroup $\Phi(G)$ is contained in the center of $G$. This is a contradiction with $[x,y] \neq 1$. Hence the prime $p$ cannot divide $\alpha$, and the same argument shows that $p$ cannot divide $\beta$. Let $\bar \alpha$ be the multiplicative inverse of $\alpha$ modulo $p$. Put $\tilde a = a^\alpha, \tilde b = b^{\bar \alpha}$. This gives $[x,y] = [x, \tilde a^p, \tilde b^{p \tilde \beta}]$ for some integer $\tilde \beta$, hence we may assume that $\alpha = 1$. Now put $\tilde a = ab$. We get $[x,y] = [x, \tilde a^p b^{p (\beta - 1)}]$.  By continuing in this manner, we degrade the exponent at the generator $b$ to $\beta = 0$, reaching a final contradiction.
\end{proof}

\begin{corollary}
\label{c:exphigher}
Let $G$ be a $\B_0$-minimal group. Then $\exp Z_i(G)/Z_{i-1}(G) = p$ for all $i \geq 2$.
\end{corollary}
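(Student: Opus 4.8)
The plan is to prove, by induction on $i \ge 2$, that $x^p \in Z_{i-1}(G)$ for every $x \in Z_i(G)$; this is precisely the assertion that $\exp Z_i(G)/Z_{i-1}(G)$ divides $p$, and each factor that is nontrivial then has exponent exactly $p$. The whole arithmetic content of the statement sits in the base case $i = 2$, where $\B_0$-minimality enters through Proposition \ref{p:highercenter}. Indeed, I would take $x \in Z_2(G)$ and $g \in G$. Since $x \in Z_2(G)$, the commutator $[x,g]$ is central, so the telescoping identities $[x^p,g] = [x,g]^p = [x,g^p]$ both hold. As $g^p \in \Phi(G) = G^p[G,G]$ and $Z_2(G)$ centralizes $\Phi(G)$ by Proposition \ref{p:highercenter}, I conclude that $[x^p, g] = [x, g^p] = 1$ for every $g \in G$, whence $x^p \in Z(G)$. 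Note that the induction must genuinely begin at $i=2$: the case $i=1$ is false in general, since $\exp Z(G)$ may equal $p^2$ by Corollary \ref{c:center}.

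For the inductive step, fix $i \ge 3$ and assume that $h^p \in Z_{i-2}(G)$ for all $h \in Z_{i-1}(G)$. The crucial observation is a commutator identity modulo $Z_{i-2}(G)$. For fixed $g \in G$, consider the map $\psi_g \colon Z_i(G) \to Z_{i-1}(G)/Z_{i-2}(G)$ given by $x \mapsto [x,g]Z_{i-2}(G)$. Whenever $x \in Z_i(G)$ we have $[x,g] \in Z_{i-1}(G)$, and since $[Z_{i-1}(G), G] \subseteq Z_{i-2}(G)$, conjugation fixes $[x,g]$ modulo $Z_{i-2}(G)$; expanding $[x_1 x_2, g] = [x_1,g]^{x_2}[x_2,g]$ then shows that $\psi_g$ is a homomorphism into the abelian group $Z_{i-1}(G)/Z_{i-2}(G)$. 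Consequently $[x^p, g] \equiv [x,g]^p \pmod{Z_{i-2}(G)}$. Applying the inductive hypothesis to $[x,g] \in Z_{i-1}(G)$ gives $[x,g]^p \in Z_{i-2}(G)$, so $[x^p, g] \in Z_{i-2}(G)$ for every $g$, that is, $x^p \in Z_{i-1}(G)$. This closes the induction.

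I expect the only delicate point to be the verification that $\psi_g$ is a homomorphism, namely that the correction term $[[x_1,g], x_2]$ lands in $Z_{i-2}(G)$; this is exactly the mechanism that lets the bound $[x^p,g] \equiv [x,g]^p$ propagate the exponent estimate up the upper central series. It is a purely nilpotent-group phenomenon (the exponents of the upper central factors are non-increasing from below), so no further appeal to $\B_0$-minimality is required beyond the case $i = 2$, which is supplied wholesale by Proposition \ref{p:highercenter}.
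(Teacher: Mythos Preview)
Your proof is correct and follows essentially the same approach as the paper: both establish the base case $i=2$ via Proposition~\ref{p:highercenter} and then use that the exponents of successive upper central factors are non-increasing. The only difference is that the paper invokes this last fact as a classical result (citing \cite[Satz III.2.13]{Hup67}), whereas you reprove it inline via the homomorphism $\psi_g$; your argument is exactly the standard proof of that result.
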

\begin{proof}
It is a classical result \cite[Satz III.2.13]{Hup67} that the exponent of $Z_{i+1}(G)/Z_i(G)$ divides the exponent of $Z_i(G)/Z_{i-1}(G)$ for all $i$. Thus it suffices to prove that $\exp Z_2(G)/Z(G) = p$. To this end, let $x \in Z_2(G)$. For any $y \in G$, we have $[x^p, y] = [x,y]^p = [x, y^p] = 1$ by the preceding proposition. Hence $x^p \in Z(G)$ and the proof is complete.
\end{proof}

Corollary \ref{c:center} can, however, be improved when the group is of small enough nilpotency class.

\begin{corollary}
\label{c:center2}
The center of a stem $\B_0$-minimal group of nilpotency class $2$ is of prime exponent.
\end{corollary}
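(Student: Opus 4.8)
The plan is to combine the prime-exponent bound on the upper central factors (Corollary \ref{c:exphigher}) with the structure of the center in a stem group. Let $G$ be a stem $\B_0$-minimal group of nilpotency class $2$. First I would record that class $2$ means $[G,G] \leq Z(G)$, so $G/Z(G)$ is abelian and therefore $Z_2(G) = G$. Applying Corollary \ref{c:exphigher} in the case $i = 2$ then gives $\exp Z_2(G)/Z(G) = \exp G/Z(G) = p$, so that $g^p \in Z(G)$ for every $g \in G$.

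Next I would use the stem hypothesis twice. Since $G$ is stem we have $Z(G) \leq [G,G]$, and by \cite[Proposition 3.12]{Mor11} the center $Z(G)$ is generated by central commutators. It therefore suffices to bound the order of a single commutator $z = [x,y] \in Z(G)$. As $G$ is of class $2$, every commutator is central, so the standard identity $[x,y]^p = [x^p,y]$ is available; together with $x^p \in Z(G)$ from the first step this yields $z^p = [x^p,y] = 1$. Hence every central commutator has order dividing $p$, and since $Z(G)$ is abelian and generated by such commutators, its exponent divides $p$. This is the assertion, sharpening the bound $p^2$ of Corollary \ref{c:center}.

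There is no serious obstacle here, the argument being essentially a synthesis of the earlier results; the one point that must be handled with care is the passage from \emph{every central commutator has order dividing $p$} to \emph{$Z(G)$ has exponent $p$}. This step relies precisely on the stem hypothesis, which both places $Z(G)$ inside $[G,G]$ and, via \cite[Proposition 3.12]{Mor11}, forces $Z(G)$ to be generated by central commutators. This is exactly the feature that fails for non-stem $\B_0$-minimal groups, as witnessed by the group of Example \ref{e:256}, whose center has exponent $8$.
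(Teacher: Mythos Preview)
Your proof is correct and follows essentially the same route as the paper: use that $Z_2(G)=G$ in class~$2$ to get $x^p\in Z(G)$ (the paper cites Proposition~\ref{p:highercenter} directly rather than its consequence Corollary~\ref{c:exphigher}), then conclude $[x,y]^p=[x^p,y]=1$. The only slight detour is your appeal to \cite[Proposition 3.12]{Mor11}: since stem gives $Z(G)\le[G,G]$ and class~$2$ gives $[G,G]\le Z(G)$, you have $Z(G)=[G,G]$ outright, so $Z(G)$ is generated by commutators without needing the citation.
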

\begin{proof}
Let $G$ be a stem $\B_0$-minimal $p$-group of nilpotency class $2$. We therefore have $Z(G) = [G, G]$. For any commutator $[x,y] \in G$, Proposition \ref{p:highercenter} gives $[x,y]^p = [x^p, y] = 1$, as required.
\end{proof}

Using Corollary \ref{c:center2} together with Corollary \ref{c:exphigher}, we classify all the $\B_0$-minimal isoclinism families of nilpotency class $2$. For later use, we also determine commuting probabilities of their stem group during the course of the proof. Before stating the result, a word on the proof itself which may be of independent interest. When proving that the Bogomolov multiplier of a given group is trivial, we essentially use the technique developed in \cite{Mor12p5}. Showing nontriviality of $\B_0(G)$ is usually more difficult. One can use cohomological methods, see, for example, \cite{Hos11,Hos12}. Here, we apply the concept of $\B_0$-pairings developed in \cite{Mor11}, which essentially reduces the problem to a combinatorial one. This enables us to explicitly determine the generators of $\B_0(G)$. A {\it $\B_0$-pairing} is a map $\phi :G\times G\to H$ satisfying $\phi (xy,z)=\phi(x^y,z^y)\phi (y,z)$, $\phi (x,yz)=\phi (x,z)\phi (x^z,y^z)$ for all $x,y,z\in G$, and $\phi (a,b)=1$ for all $a,b\in G$ with $[a,b]=1$. In this case, $\phi$ determines a unique homomorphism $\phi^*:G\curlywedge G\to H$ such that $\phi^*(x\wedge y)=\phi (x,y)$ for all $x,y\in G$, see \cite{Mor11}. In order to prove that a certain element $w=\prod _i(a_i\curlywedge b_i)$ of $\B_0(G)$ is nontrivial, it therefore suffices to find a group $H$ and a $\B_0$-pairing $\phi :G\times G\to H$ such that $\prod _i\phi (a_i,b_i)\neq 1$.

\begin{theorem}
\label{t:class2}
A $\B_0$-minimal isoclinism family of nilpotency class $2$ is determined by one of the following two stem $p$-groups:
\[	
\begin{aligned}
G_1 =& \left\langle \begin{array}{l|l}
	\multirow{2}{*}{$a,b,c,d$} & a^p = b^p = c^p = d^p = 1, \\
	& [a,b] = [c,d], \, [b,d] = [a,b]^{\varepsilon}[a,c]^{\omega}, \, [a,d] = 1, \, \text{class $2$}
\end{array} \right\rangle, \\[.5em]
G_2 =& \left\langle\begin{array}{l|l} \multirow{2}{*}{$a,b,c,d$} & a^p = b^p = c^p = d^p = 1, \\
	& [a,b] = [c,d], \, [a,c] = [a,d] = 1, \, \text{class $2$}
\end{array}\right\rangle,
\end{aligned}
\]
where $\varepsilon = 1$ for $p = 2$ and $\varepsilon = 0$ for odd primes $p$, and $\omega$ is a generator of the group $(\Z/p\Z)^\times$. The groups $G_1$ and $G_2$ are of order $p^7$, their Bogomolov multipliers are $\B_0(G_1) \cong \Z/p\Z \oplus \Z/p\Z$, $\B_0(G_2) \cong \Z/p\Z$, and their commuting probabilities equal $\cp(G_1) = (p^3 + 2p^2 - p - 1)/p^6$, $\cp(G_2) = (2p^2 + p - 2)/p^5$.
\end{theorem}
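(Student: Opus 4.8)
The plan is to reduce the statement to a problem in the orthogonal geometry of a six-dimensional quadratic space and to resolve it with Witt's theorem, treating the classification and the verification of $G_1,G_2$ as two separate tasks.

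First, by Proposition \ref{p:families} it suffices to classify the \emph{stem} $\B_0$-minimal groups of class $2$. Let $G$ be such a group and set $V=G/Z(G)$ and $W=[G,G]=Z(G)$. Corollary \ref{c:center2} gives $\exp W=p$ and Corollary \ref{c:exphigher} (with $Z_2(G)=G$) gives $\exp V=p$, so both are $\mathbb{F}_p$-vector spaces and the commutator induces a surjective alternating map $\beta\colon \wedge^2 V\to W$. By Theorem \ref{t:frattini} we have $\dim V=\rk(G)\le 4$, and since every element of $\wedge^2 V$ is decomposable when $\dim V\le 3$, the description below forces $\dim V=4$. I would then record the combinatorial description of $\B_0$ obtained, exactly as in the proof of Lemma \ref{l:generators}, from the $\B_0$-pairing machinery of \cite{Mor11}: writing $K=\ker\beta\subseteq\wedge^2 V$, one has $G\curlywedge G\cong \wedge^2 V\big/\langle u\wedge v : \beta(u\wedge v)=0\rangle$ and hence
\[
\B_0(G)\;\cong\; K\big/\langle u\wedge v\in K : u,v\in V\rangle,
\]
the quotient of $K$ by the span of the decomposable $2$-vectors it contains.

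The key idea is that decomposability is governed by the Pfaffian (Klein) quadratic form $q$ on the six-dimensional space $\wedge^2 V$: a $2$-vector is decomposable precisely when it is singular for $q$, and $q$ is nondegenerate, split, of Witt index $3$. Since $\GL(V)$ acts on $(\wedge^2 V,q)$ by orthogonal similitudes, classifying the triples $(V,W,\beta)$ up to isoclinism amounts to classifying the subspaces $K$ up to the orthogonal similitude group, which Witt's theorem controls. Writing $R=K\cap K^{\perp}$ for the radical of the restricted polar form, the span of the singular vectors of $K$ is the full preimage of the singular vectors of the nondegenerate space $K/R$, so $\B_0(G)\cong (K/R)\big/\langle\text{singular vectors}\rangle$. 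A nondegenerate quadratic space over $\mathbb{F}_p$ is spanned by its singular vectors unless it is anisotropic, and anisotropic spaces have dimension at most $2$; hence $\B_0(G)\ne 0$ if and only if $K/R$ is anisotropic of dimension $1$ or $2$, giving $\B_0(G)\cong\mathbb{F}_p$ or $\mathbb{F}_p^2$ respectively. Feeding this through the Witt-index bookkeeping for the totally singular $R$ (with $R^{\perp}/R$ split of Witt index $3-\dim R$) shows that such a configuration forces $\dim K\le 3$.

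The minimality conditions then pin down $\dim K=3$. Subgroup-minimality is automatic: since $\Phi(G)=Z(G)$, a proper subgroup $H$ has central quotient of rank at most $3$, so its exterior square carries no indecomposable obstruction and $\B_0(H)=0$. For a central line $N\le W$ the quotient $G/N$ corresponds to $K_N=\beta^{-1}(N)\supseteq K$ with $\dim K_N=\dim K+1$, and the dimension bound above shows $\B_0(G/N)\ne 0$ would force $\dim K_N\le 3$. Thus if $\dim K=3$ every proper quotient dies automatically, whereas if $\dim K\le 2$ one exhibits a line $N$ for which $K_N$ is anisotropic modulo its radical, so $G$ is not minimal. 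Hence the $\B_0$-minimal class-$2$ stem groups are exactly those with $\dim K=3$ and $K/R$ anisotropic, i.e.\ the two types $(\dim K/R,\dim R)=(2,1)$ and $(1,2)$. By Witt's theorem each type is a single orbit, the square-class ambiguity of the anisotropic part being absorbed by similitudes, so there are exactly two families; writing them out gives $G_1$ (anisotropic plane, whence the primitive root $\omega$, with the $\varepsilon$-term supplying the cross term of the form $x^2+xy+y^2$ needed in characteristic $2$) and $G_2$ (anisotropic line). Both have order $p^{\dim V+\dim W}=p^{4+3}=p^7$, and $\B_0(G_1)\cong\mathbb{F}_p^2$, $\B_0(G_2)\cong\mathbb{F}_p$ are read off from $\dim(K/R)$.

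Finally, the commuting probabilities follow from $\cp(G)=p^{-4}\sum_{\bar x\in V}p^{-\rk\beta(\bar x,\,\cdot\,)}$, obtained by summing centralizer orders over $V$; computing the rank distribution of $\bar x\mapsto\beta(\bar x,\cdot)$ from each presentation yields the stated values (for $G_2$ the ranks $0,1,2,3$ occur with multiplicities $1,\,p-1,\,p^3-p,\,p^4-p^3$, giving $\cp(G_2)=(2p^2+p-2)/p^5$, and $G_1$ is analogous). The main obstacle I anticipate is getting the orthogonal geometry exactly right in characteristic $2$, where the Klein form must be taken to be the Pfaffian (its naive polarization $w\wedge w$ vanishing identically) and where the anisotropic plane is defective in the usual way; this is precisely the phenomenon encoded by $\varepsilon$, and it also requires care in the subgroup-minimality step. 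A secondary, purely computational hurdle is the bookkeeping of the rank distributions needed for $\cp(G_1)$ and $\cp(G_2)$.
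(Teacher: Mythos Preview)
Your approach is correct and takes a genuinely different route from the paper's. The paper argues concretely: after the same initial reductions, it bounds the order of a stem $\B_0$-minimal class-$2$ group between $p^7$ and $p^9$, then for order $p^7$ invokes the O'Brien--Vaughan-Lee classification of groups of exponent $p$ (and, for $p=2$, Proposition~\ref{p:smallb0mins}) to reduce to three explicit candidates $G_{18},G_{19},G_{20}$, dispatches $G_{19}$ by an explicit commuting-pair identity, and verifies $\B_0(G_{18})$ and $\B_0(G_{20})$ nontrivial by constructing explicit $\B_0$-pairings into $\Z/p\Z$ and $(\Z/p\Z)^2$; orders $p^8$ and $p^9$ are eliminated by a case analysis on the extra commutator relation, reducing to rational canonical form. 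Your orthogonal-geometry approach via the Klein quadric is more conceptual and self-contained: it explains uniformly why there are exactly two families, reads off $\B_0(G)\cong K/R$ directly, and avoids any external classification of small $p$-groups. The paper's approach in turn yields the explicit $\B_0$-pairings and centralizer computations as a by-product, which feed directly into the proof of Theorem~\ref{t:cp-p}. Two steps in your sketch deserve to be spelled out: the transitivity assertion requires that the image of $\GL(V)$ in $GO_6^+$ (not merely $O_6^+$, where Witt's theorem lives) acts transitively on each isometry type of $3$-plane---this holds, essentially because $\SL_4$ surjects onto $\Omega_6^+$ and the stabilizer of an anisotropic subspace already meets both cosets of $SO$ in $O$---and the ``one exhibits a line $N$'' step for $\dim K\le 2$ should be made explicit (take any singular vector in $K^\perp\setminus K$). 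Your caveat about characteristic~$2$ is well placed: one must check that the radical $R$ of the polar form is in fact totally singular for the Pfaffian (so that $q$ descends to $K/R$), which holds for the two configurations in question but is not automatic in general.
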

\begin{proof}
Following Proposition \ref{p:families} and Proposition \ref{p:pgroups}, we may restrict ourselves to studying a stem $\B_0$-minimal $p$-group $G$ of nilpotency class $2$. This immediately implies $Z(G) = [G,G]$, and it follows from Theorem \ref{t:frattini} that the group $G$ may be generated by $4$ elements, say $a,b,c,d$, satisfying $[a,b] = [c,d]$. By Corollary \ref{c:exphigher} and Corollary \ref{c:center2}, the exponents of both $[G,G]$ and $G/[G,G]$ equal to $p$. Furthermore, the derived subgroup of $G$ is of rank at most $\binom{4}{2} - 1 = 5$, and $G/[G,G]$ is of rank at most $4$. The order of the group $G$ is therefore at most $p^9$.

Proposition \ref{p:smallb0mins} shows that no $\B_0$-minimal isoclinism families of rank at most $6$ are of nilpotency class $2$. Hence $G$ is of order at least $p^7$. Together with the above reasoning, this shows that $G$ must be of Frattini rank precisely $4$. Moreover, by possibly replacing $G$ by a group isoclinic to it, we may assume without loss of generality that $a^p = b^p = c^p = d^p = 1$. The group $G$ may therefore be regarded as a quotient of the group
\[
	K = \left\langle a,b,c,d \mid a^p = b^p = c^p = d^p = 1, \, [a,b] = [c,d], \, \text{class $2$} \right\rangle,
\]
which is of order $p^9$, nilpotency class $2$, exponent $p$ when $p$ is odd, and has precisely one commutator relation.

Suppose first that $G$ is of order precisely $p^7$. When $p = 2$, we invoke Proposition \ref{p:smallb0mins} to conclude that $G$ belongs to either the family $\Phi_{30}$ or $\Phi_{31}$ due to the nilpotency class restriction. It is readily verified using the classification \cite{Jam90} that the groups $G_1$ and $G_2$ given in the statement of the theorem are stem groups of these two families, respectively. Suppose now that $p$ is odd. The $p$-groups of order $p^7$ have been classified by O'Brien and Vaughan-Lee \cite{Obr05}, the detailed notes on such groups of exponent $p$ are available at \cite{Vau01}. Following these, we see that the only stem groups of Frattini rank $4$ and nilpotency class $2$ are the groups whose corresponding Lie algebras are labeled as $(7.16)$ to $(7.20)$ in \cite{Vau01}. In the groups arising from $(7.16)$ and $(7.17)$, the nontrivial commutators in the polycyclic presentations are all different elements of the polycyclic generating sequence. It follows from \cite{Mor12p5} that these groups have trivial Bogomolov multipliers. The remaining groups, arising from the algebras $(7.18)$ to $(7.20)$, are the following:
\[	
\begin{aligned}
	G_{18} =& \left\langle \begin{array}{l|l} \multirow{2}{*}{$a,b,c,d$} & a^p = b^p = c^p = d^p = 1, \\ & [a,c] = [a,d] = 1, [a,b] = [c,d], \, \text{class $2$} \end{array} \right\rangle, \\[.5em]
	G_{19} =& \left\langle \begin{array}{l|l} \multirow{2}{*}{$a,b,c,d$} & a^p = b^p = c^p = d^p = 1, \\ & [a,d] = 1, [b,c] = [c,d], [b,d] = [a,c], \, \text{class $2$} \end{array} \right\rangle, \\[.5em]
	G_{20} =& \left\langle \begin{array}{l|l} \multirow{2}{*}{$a,b,c,d$} & a^p = b^p = c^p = d^p = 1, \\ & [a,d] = 1, [a,b] = [c,d], [b,d] = [a,c]^{\omega}, \, \text{class $2$} \end{array} \right\rangle,
\end{aligned}
\]
where $\omega$ is a generator of the multiplicative group of units of $\Z/p\Z$.

Let us first show that $\B_0(G_{19})$ is trivial. To this end, alter the presentation of $G_{19}$ by replacing $b$ with $bd$, which allows to assume $[b,c] = 1$. Now consider an element $w \in G_{19} \curlywedge G_{19}$. Since $G_{19}$ is of nilpotency class $2$, the group $G_{19} \curlywedge G_{19}$ is abelian and may be generated by the elements
$a \curlywedge b$, $a \curlywedge c$, $a \curlywedge d$, $b \curlywedge c$, $b \curlywedge d$, $c \curlywedge d$.
These are all of order dividing $p$ and we have $a \curlywedge d = b \curlywedge c = 1$. Hence $w$ can be expanded as $w = (a \curlywedge b)^{\alpha} (a \curlywedge c)^\beta (b \curlywedge d)^\gamma (c \curlywedge d)^\delta$ for some integers $\alpha, \beta, \gamma, \delta$. Note that $w$ belongs to $\B_0(G_{19})$ precisely when its image under the commutator map is trivial. In terms of exponents, this is equivalent to $\alpha = \beta + \gamma = \delta = 0$ modulo $p$. Putting $v = (a \curlywedge c)(b \curlywedge d)^{-1}$, we thus have $\B_0(G_{19}) = \langle v \rangle$. Note, however, that $[ab^{-1}cd, cd] = [a,c][b,d]^{-1} = 1$ in $G_{19}$, which gives $v = ab^{-1}cd \curlywedge cd = 1$. Hence $\B_0(G_{19})$ is trivial.

We now turn to the group $G_{18}$ and show that $\B_0(G_{18}) \cong \Z/p\Z$. As there are no groups of nilpotency class $2$ and order at most $p^6$ with a nontrivial Bogomolov multiplier, this alone will immediately imply that $G_{18}$ is a $\B_0$-minimal group. During the course of this, we also determine the orders of centralizers of elements of $G_{18}$. Just as with the group $G_{19}$, we first see that $\B_0(G_{18}) = \langle v \rangle$ with $v = (a \curlywedge b)(c \curlywedge d)^{-1}$. It now suffices to show that the element $v$ is in fact nontrivial in $G_{18} \curlywedge G_{18}$. This is done by constructing a certain $\B_0$-pairing $\phi \colon G_{18} \times G_{18} \to \Z/p\Z$. We define this pairing on tuples of elements of $G_{18}$, written in normal form. For $x = a^{\alpha_1} b^{\alpha_2} c^{\alpha_3} d^{\alpha_4} e_1$ and $y = a^{\beta_1} b^{\beta_2} c^{\beta_3} d^{\beta_4} e_2$ with $e_1, e_2 \in [G_{18}, G_{18}]$ and $0 \leq \alpha_i, \beta_i < p$, define
\[
	\phi(x,y) = \left| \begin{smallmatrix} \alpha_1 & \alpha_2 \\ \beta_1 & \beta_2 \end{smallmatrix} \right| + p\Z.
\]
Let us show that $\phi$ is indeed a $\B_0$-pairing. Start with arbitrary elements $x,y,z$ in $G_{18}$. Note that the definition of $\phi$ depends only on representatives modulo the subgroup $\langle c,d \rangle [G_{18}, G_{18}]$ of $G_{18}$. We thus have $\phi(x^y, z) = \phi(x[x,y], z) = \phi(x,z)$ and similarly $\phi(x, y^z) = \phi(x,y)$. The definition of $\phi$ is bilinear with respect to the exponent vectors of its two components, so we also have $\phi(xy,z) = \phi(x,z) + \phi(y,z)$, and the same goes for the second component. Suppose now that $[x,y] = 1$. Expanding by the basis of $[G_{18}, G_{18}]$ gives
\[
	[x,y] = [a,b]^{
	\left| \begin{smallmatrix} \alpha_1 & \alpha_2 \\ \beta_1 & \beta_2 \end{smallmatrix} \right|
	+
	\left| \begin{smallmatrix} \alpha_3 & \alpha_4 \\ \beta_3 & \beta_4 \end{smallmatrix} \right|
	} [b,c]^{
	\left| \begin{smallmatrix} \alpha_2 & \alpha_3 \\ \beta_2 & \beta_3 \end{smallmatrix} \right|
	} [b,d]^{
	\left| \begin{smallmatrix} \alpha_2 & \alpha_4 \\ \beta_2 & \beta_4 \end{smallmatrix} \right|
	}.
\]
Assume first that $\alpha_2 \neq 0$. Choosing any $\beta_2$ uniquely determines $\beta_3$ and $\beta_4$ via the exponents at the commutators $[b,c]$ and $[b,d]$, and therefore also $\beta_1$ via the exponent at $[a,b]$. Hence $|C_G(x)| = p^4$ and $\phi(x,y) = p\Z$ in this case. Suppose now that $\alpha_2 = 0$. If $\alpha_3 = \alpha_4 = \alpha_1 = 0$, then $x$ in central in $G$ and $\phi(x,y) = p\Z$. Next, if $\alpha_3 = \alpha_4 = 0$ and $\alpha_1 > 0$, then we must have $\beta_2 = 0$ by regarding the exponent at $[a,b]$, and this is the only restriction we have on $y$, hence $|C_G(x)| = p^6$ and of course $\phi(x,y) = p\Z$. Assume now that $\alpha_3 = 0$ and $\alpha_4 > 0$, in which case we must have $\beta_2 = \beta_3 = 0$. This gives $|C_G(x)| = p^5$ and $\phi(x,y) = p\Z$. Finally, when $\alpha_3 > 0$, we get $\beta_2 = 0$ from the exponent at the commutator $[b,c]$, from where it follows that $|C_G(x)| = p^5$ and $\phi(x,y) = p \Z$. We have thus shown that the mapping $\phi$ is a $\B_0$-pairing. Therefore $\phi$ determines a unique homomorphism of groups $\phi^* \colon G_{18} \curlywedge G_{18} \to \Z/p\Z$ such that $\phi^*(x \curlywedge y) = \phi(x,y)$ for all $x,y \in G_{18}$. As we have $\phi^*(v) = \phi(a,b) - \phi(c,d) = 1 + p\Z$, the element $v$ is nontrivial in $G_{18} \curlywedge G_{18}$. Hence $\B_0(G_{18}) = \langle v \rangle \cong \Z/p\Z$, and so $G_{18}$ is a $\B_0$-minimal group. By the by, we have shown $\kk(G) = 2p^4 + p^3 - 2p^2$, giving the claimed commuting probability. In the statement of the theorem, the group $G_{18}$ corresponds to $G_2$.

At last, we deal with the group $G_{20}$. During the course of this, we also determine the orders of centralizers of its elements.
It is verified as with the group above that $\B_0(G_{20}) = \langle v_1, v_2 \rangle$ with $v_1 = (c \curlywedge d)(a \curlywedge b)^{-1}$ and $v_2 = (b \curlywedge d)(a \curlywedge c)^{-\omega}$.
We now construct a $\B_0$-pairing $\phi \colon G_{20} \times G_{20} \to \Z/p\Z \times \Z/p\Z$ setting $v_1$ and $v_2$ apart. We define this pairing on tuples of elements of $G_{20}$, written in normal form. For $x = a^{\alpha_1} b^{\alpha_2} c^{\alpha_3} d^{\alpha_4} e_1$ and $y = a^{\beta_1} b^{\beta_2} c^{\beta_3} d^{\beta_4} e_2$ with $e_1, e_2 \in [G_{20}, G_{20}]$ and $0 \leq \alpha_i, \beta_i < p$, define
\[
	\phi(x,y) = \left(
	\left| \begin{smallmatrix} \alpha_2 & \alpha_1 \\ \beta_2 & \beta_1 \end{smallmatrix} \right| + p\Z
	,
	\left| \begin{smallmatrix} \alpha_2 & \alpha_4 \\ \beta_2 & \beta_4 \end{smallmatrix} \right| + p\Z
	\right).
\]
We verify as with the pairing in the previous case that $\phi$ is a bilinear mapping that depends only on representatives modulo $[G_{20}, G_{20}]$. Suppose now that $[x,y] = 1$. Expanding by the basis of $[G_{20}, G_{20}]$ gives
\[
	[x,y] = [a,b]^{ 
	\left| \begin{smallmatrix} \alpha_1 & \alpha_2 \\ \beta_1 & \beta_2 \end{smallmatrix} \right|
	+
	\left| \begin{smallmatrix} \alpha_3 & \alpha_4 \\ \beta_3 & \beta_4 \end{smallmatrix} \right|
	} [a,c]^{
	\left| \begin{smallmatrix} \alpha_1 & \alpha_3 \\ \beta_1 & \beta_3 \end{smallmatrix} \right|
	+
	\omega \left| \begin{smallmatrix} \alpha_2 & \alpha_4 \\ \beta_2 & \beta_4 \end{smallmatrix} \right|
	} [b,c]^{
	\left| \begin{smallmatrix} \alpha_2 & \alpha_3 \\ \beta_2 & \beta_3 \end{smallmatrix} \right|
	}.
\]
Assume first that $\alpha_2 \neq 0$. The vector $[\alpha_3, \beta_3]$ modulo $p$ is thus a $\Z/p\Z$-multiple of the vector $[\alpha_2, \beta_2]$ since the exponent at $[b,c]$ is divisible by $p$. Hence we have $\alpha_3 = \lambda \alpha_2, \beta_3 = \lambda \beta_2$ for some integer $\lambda$. Taking the exponents at $[a,b]$ and $[a,c]$ into account, we obtain 
\[
	\left| \begin{smallmatrix} \alpha_1 & \alpha_2 \\ \beta_1 & \beta_2 \end{smallmatrix} \right|
	+
	\lambda \left| \begin{smallmatrix} \alpha_2 & \alpha_4 \\ \beta_2 & \beta_4 \end{smallmatrix} \right|
	= 0
	\quad \text{and} \quad
	\lambda \left| \begin{smallmatrix} \alpha_1 & \alpha_2 \\ \beta_1 & \beta_2 \end{smallmatrix} \right|
	+
	\omega \left| \begin{smallmatrix} \alpha_2 & \alpha_4 \\ \beta_2 & \beta_4 \end{smallmatrix} \right| = 0.
\]
If at least one of the two determinants in the above system is nonzero modulo $p$, then the system itself has a nontrivial solution, hence its determinant is trivial. This implies $\omega = \lambda^2$ modulo $p$, which is impossible, since $\omega$ is a generator of $(\Z/p\Z)^\times$. Both of the two determinants above are therefore trivial modulo $p$, implying that choosing any $\beta_2$ uniquely determines $\beta_1, \beta_3$ and $\beta_4$. Hence $|C_G(x)| = p^4$ in this case and we also have $\phi(x,y) = (p\Z, p\Z)$. Assume now that $\alpha_2 = 0$. If $\alpha_3 > 0$, then the exponent at $[b,c]$ gives $\beta_2 = 0$, which shows that $\phi(x,y) = (p\Z,p\Z)$. Taking the exponents at $[a,b]$ and $[a,c]$ into account, we see that choosing any $\beta_3$ uniquely determines $\beta_1$ and $\beta_4$, so we have $|C_G(x)| = p^4$ in this case. Now assume that $\alpha_3 = 0$. If we also have $\alpha_4 = 0$, then the element $x$ is either central, in which case $\alpha_1 = 0$ and so $\phi(x,y)$ is trivial, or $\alpha_1 > 0$ and thus $\beta_2 = \beta_3 = 0$, which gives $|C_G(x)| = p^5$ and $\phi(x,y) = (p\Z,p\Z)$. Finally, when $\alpha_4 > 0$, we either have $\alpha_1 = 0$, which implies $\beta_2 = \beta_3 = 0$, or $\alpha_1 > 0$, in which case $\beta_2$ determines $\beta_1$ and $\beta_4$. Both cases satisfy $|C_G(x)| = p^5$ and $\phi(x,y) = (p\Z,p\Z)$. We have thus shown that the mapping $\phi$ is a $\B_0$-pairing. As such, it determines a unique homomorphism of groups $\phi^* \colon G_{20} \curlywedge G_{20} \to \Z/p\Z \times \Z/p\Z$ with the property $\phi^*(x \curlywedge y) = \phi(x,y)$ for all $x,y \in G_{20}$. Notice that $\phi^*(v_1) = \phi(c,d) - \phi(a,b) = (1 + p\Z, p\Z)$ and $\phi^*(v_2) = \phi(b,d) - \omega \phi(a,c) = (p\Z, 1 + p\Z)$, showing that the elements $v_1$ and $v_2$ are indeed nontrivial and none is contained in the subgroup generated by the other. Hence $\B_0(G_{20}) = \langle v_1, v_2 \rangle \cong \Z/p\Z \times \Z/p\Z$, and so $G_{20}$ is a $\B_0$-minimal group. By the by, we have shown $\kk(G) = p^4 + 2p^3 - p^2 - p$, giving the claimed commuting probability. In the statement of the theorem, the group $G_{20}$ corresponds to $G_1$.

So far, we have dealt with the case when the $\B_0$-minimal group $G$ is of order at most $p^7$. Were $G$ of order $p^9$, it would be isomorphic to the group $K$. By what we have shown so far, this group is not $\B_0$-minimal, since it possesses proper quotients with nontrivial Bogomolov multipliers, namely both the groups $G_1$ and $G_2$. The only remaining option is for the group $G$ to be of order $p^8$. Regarding $G$ as a quotient of $K$, this amounts to precisely one additional commutator relation being imposed in $K$, i.e., one of the commutators in $G$ may be expanded by the rest. By possibly permuting the generators, we may assume that this is the commutator $[b,d]$, so
\[
	[b,d] = [a,b]^\alpha [a,c]^\beta [a,d]^\gamma [b,c]^\delta
\]
for some integers $\alpha,\beta,\gamma,\delta$. Replacing $b$ by $ba^{-\gamma}$ and $d$ by $dc^{-\delta}$, we may further assume $\gamma = \delta = 0$.

For $p = 2$, the above expansion reduces to only $4$ possibilities. When $\alpha = \beta = 0$, interchanging $a$ with $b$ and $c$ with $d$ shows that the group $G$ possesses a proper quotient isomorphic to $G_2$. Next, when $\alpha = \beta = 0$, the group $G$ possesses a proper quotient isomorphic to $G_1$. In the case $\alpha = 1$, $\beta = 0$, replacing $c$ by $b^{-1}c$ and $a$ by $ad$ enables us to rewrite the commutator relations to $[a,b] = [c,d] = 1$. There are thus no commutator relations between the nontrivial commutators in $G$, so the Bogomolov multiplier of $G$ is trivial by \cite{Mor12p5}. Finally, when $\alpha=0$, $\beta=1$, use \cite{Jam90} to see that the group $G/\langle [a,d] \rangle$ belongs to the isoclinism family $\Phi_{31}$, thus having a nontrivial Bogomolov multiplier by Proposition \ref{p:smallb0mins}. This shows that $G$ is not a $\B_0$-minimal group in neither of these cases.

Now let $p$ be odd. The argument here is essentially the same as in the even case, only the relations need to be reduced first. For this purpose, let $\lambda$ be the multiplicative inverse of $2$ modulo $p$. Replacing $d$ by $a^{\lambda \alpha} d$, the commutator relations are rewritten to $[b,d] = [a,b]^{\lambda \alpha} [a,c]^\beta$ and $[c,d] = [a,b] [a,c]^{- \lambda \alpha}$. Put the corresponding exponents in a matrix
\[
	A = \left[ \begin{smallmatrix} \lambda \alpha & \beta \\ 1 & - \lambda \alpha \end{smallmatrix} \right].
\]
Note that the trace of $A$ equals $0$. Replacing $b$ by $\tilde b = b^{p_{11}} c^{p_{12}}$ and $c$ by $\tilde c = b^{p_{21}} c^{p_{22}}$ gives $[\tilde b,d] = [b,d]^{p_{11}} [c,d]^{p_{12}}$ and $[\tilde c, d] = [b,d]^{p_{21}} [c,d]^{p_{22}}$. Such a change of generators replaces the matrix $A$ by a similar matrix $PAP^{-1}$, where $P$ is the matrix with entries $p_{ij}$. This enables us to put $A$ in its rational canonical form. Assume first that $\det A = 0$. Since the trace of $A$ equals $0$, both eigenvalues of $A$ must be $0$. If $A$ is in fact the zero matrix, the above relations reduce to $[b,d] = [c,d] = 1$. In this case, the group $G$ possesses a proper quotient isomorphic to $G_{18}$, and so $G$ is not a $\B_0$-minimal group. The other option is that $A$ is the matrix of zeros except the $(1,2)$-entry being equal to $1$. The relations reduce to $[b,d] = [a,c]$ and $[c,d] = 1$ in this case. Interchanging $a$ with $d$, we see that $G$ again possesses a proper quotient isomorphic to $G_{18}$. Next, consider the case when $\det A$ is not a quadratic residue modulo $p$. The characteristic polynomial of $A$ is thus split, giving two eigenvalues of opposite sign. By possibly replacing $a$ by $a^k$, the matrix $A$ of relations gets scaled by $k$. We may thus assume that $A$ is the diagonal matrix with entries $1$, $-1$. This gives the relations $[b,d] = [a,b]$ and $[c,d] = [a,c]^{-1}$. Replacing $d$ by $ad$, these reduce to $[b,d] = 1$ and $[c,d] = [a,c]^{-2}$. A further replacement of $a$ by $a^{- \lambda}$ result in $[b,d] = 1$ and $[c,d] = [a,c]$. Interchanging $a$ with $b$ now gives $[a,d] = 1$ and $[b,c] = [c,d]$, and now replacing $b$ by $bd$ finally gives $[a,d] = [b,c] = 1$. There are thus no commutator relations between the nontrivial commutators in $G$, so $\B_0(G)$ must be trivial. Finally, consider the case when $\det A$ is a square residue modulo $p$. Hence $- \det A$ is a nonresidue modulo $p$, so by possibly replacing $a$ by its proper power as above, we may assume $\det A = -\omega$, where $\omega$ is the generator of the group of units of $\Z/p\Z$. The characteristic polynomial of $A$ does not split and has a trivial linear term, so the matrix $A$ equals $\left[ \begin{smallmatrix} 0 & \omega \\ 1 & 0 \end{smallmatrix} \right]$ in its rational canonical form. This gives the commutator relations $[c,d] = [a,b]$ and $[b,d] = [a,c]^{\omega}$, showing that the group $G$ possesses a proper quotient isomorphic to $G_{20}$, and is therefore not $\B_0$-minimal. The proof is complete.
\end{proof}

Note that Theorem \ref{t:class2} shows, in particular, that there exist $\B_0$-minimal groups with noncyclic Bogomolov multipliers.
We also record a corollary following from the proof of Theorem \ref{t:class2} here.

\begin{corollary}
\label{c:class2p7}
Let $G$ be a $p$-group of order $p^7$ and nilpotency class $2$. Then $\B_0(G)$ is nontrivial if and only if $G$ belongs to one of the two isoclinism families given by Theorem \ref{t:class2}. Moreover, the stem groups of these families are precisely the groups of minimal order that have nontrivial Bogomolov multipliers and are of nilpotency class $2$.
\end{corollary}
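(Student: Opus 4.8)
The plan is to derive the corollary from Theorem \ref{t:class2} together with one structural observation: every finite group with nontrivial Bogomolov multiplier admits a $\B_0$-minimal section. Concretely, among all sections $S = U/V$ of $G$ (with $V \trianglelefteq U \leq G$) satisfying $\B_0(S) \neq 0$, choose one of minimal order. Every proper subgroup and every proper quotient of such an $S$ is again a section of $G$ of strictly smaller order, so it has trivial Bogomolov multiplier by minimality; hence $S$ is $\B_0$-minimal. The second ingredient is that subgroups and quotients of a group of nilpotency class $2$ again have class at most $2$, so the same is true of any section.

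First I would pin down the minimal order. If $H$ has class $2$ and $\B_0(H) \neq 0$, pick a $\B_0$-minimal section $S$. It has class at most $2$ and cannot be abelian (abelian groups have trivial multiplier), so it has class exactly $2$; hence $S$ lies in a $\B_0$-minimal isoclinism family of class $2$. By Proposition \ref{p:smallb0mins} no such family has rank at most $6$, so its stem groups have order at least $p^7$, forcing $|S| \geq p^7$ and thus $|H| \geq p^7$. Consequently no group of class $2$ and order at most $p^6$ has a nontrivial Bogomolov multiplier, and $p^7$ is the smallest order at which class-$2$ examples can occur.

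Next I would settle the equivalence for $|G| = p^7$. If $\B_0(G) \neq 0$, a $\B_0$-minimal section $S$ of $G$ has class $2$ and order at least $p^7$ by the previous step, while $|S| \leq |G| = p^7$; the only possibility is $S = G$, so $G$ is itself $\B_0$-minimal of class $2$, and Theorem \ref{t:class2} places it in the isoclinism family of $G_1$ or $G_2$. Conversely, if $G$ lies in the family of $G_1$ or of $G_2$, then $\B_0(G) \cong \B_0(G_1)$ or $\B_0(G) \cong \B_0(G_2)$ by the isoclinism invariance of the Bogomolov multiplier \cite{Mor14}, and both of these are nontrivial by Theorem \ref{t:class2}. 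This gives the stated equivalence.

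The concluding clause then assembles the two halves: the class-$2$ groups of minimal order with nontrivial Bogomolov multiplier are exactly the class-$2$ groups of order $p^7$ with nontrivial $\B_0$, which by the equivalence are precisely the groups of order $p^7$ lying in the families of $G_1$ and $G_2$. As $p^7$ is the minimal order occurring in each of these families, such groups are exactly their stem groups. I expect the only genuinely delicate point to be the verification that a minimal section with nontrivial multiplier is actually $\B_0$-minimal and that passing to sections preserves the class-$2$ hypothesis; once these are in place, the argument is a matter of bookkeeping on top of Theorem \ref{t:class2}.
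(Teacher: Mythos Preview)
Your argument is correct and is essentially the natural reconstruction of what the paper intends; the corollary is stated without proof, merely as ``following from the proof of Theorem \ref{t:class2}'', and your section-minimality reduction to $\B_0$-minimal groups of class $2$ is exactly how one extracts it. One small remark: Proposition \ref{p:smallb0mins} only lists the $\B_0$-minimal families of small rank without recording their nilpotency classes, so the assertion that none of them has class $2$ is really drawn from the proof of Theorem \ref{t:class2} (where this is stated explicitly); you may want to adjust the citation accordingly.
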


In general, there is no upper bound on the nilpotency class of a stem $\B_0$-minimal group. We show this by means of constructing a stem $\B_0$-minimal $2$-group of order $2^n$ and nilpotency class $n-3$ for any $n \geq 6$. Note that since the nilpotency class is an isoclinism invariant, this gives an infinite number of isoclinism families whose Bogomolov multipliers are all nontrivial. As we use Corollary \ref{c:cp} to do this, the example is provided in Section \ref{s:app}. On the other hand, the bound on the exponent of the center provided by Corollary \ref{c:center} together with the bound on the number of generators given by Theorem \ref{t:frattini} show that fixing the nilpotency class restricts the number of $\B_0$-minimal isoclinism families.

\begin{corollary}
\label{c:nilpotency}
Given a prime $p$ and nonegative integer $c$, there are only finitely many $\B_0$-minimal isoclinism families containing a $p$-group of nilpotency class $c$.
\end{corollary}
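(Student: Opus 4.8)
The plan is to reduce the statement to a uniform bound on the order of stem $\B_0$-minimal $p$-groups of class $c$. Nilpotency class and the nontriviality of $\B_0$ are isoclinism invariants, and by Hall every family contains stem groups, which are $\B_0$-minimal by Proposition \ref{p:families}. Thus if I can produce a constant $M$, depending only on $p$ and $c$, such that every stem $\B_0$-minimal $p$-group $G$ of class $c$ satisfies $|G| \le p^M$, then only finitely many isomorphism types of such $G$ occur, and hence only finitely many families contain one. (For $c \le 1$ there is nothing to prove, as $\B_0$-minimal groups have class at least $2$.)

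To bound $|G| = \prod_{i=1}^c |Z_i(G)/Z_{i-1}(G)|$, I would treat each upper central factor separately, controlling its exponent and its rank. The exponents are already at hand: since $G$ is stem, Corollary \ref{c:center} gives $\exp Z(G) \mid p^2$, while Corollary \ref{c:exphigher} gives $\exp Z_i(G)/Z_{i-1}(G) = p$ for every $i \ge 2$. It remains to bound the ranks $r_i = \dd(Z_i(G)/Z_{i-1}(G))$, that is, their $\mathbb{F}_p$-dimensions for $i \ge 2$.

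Here the key device is the commutator map. For $x \in Z_i(G)$ the assignment $g \mapsto [x,g]Z_{i-2}(G)$ is a homomorphism from $G$ to $Z_{i-1}(G)/Z_{i-2}(G)$ that vanishes on $Z(G)$ and whose kernel is precisely $Z_{i-1}(G)$; when $i \ge 3$ it lands in the elementary abelian group $Z_{i-1}(G)/Z_{i-2}(G)$. This embeds $Z_i(G)/Z_{i-1}(G)$ into $\Hom(G/Z(G), Z_{i-1}(G)/Z_{i-2}(G))$, whose $\mathbb{F}_p$-dimension is at most $\dd(G/Z(G)) \cdot r_{i-1} \le 4 r_{i-1}$, since $\dd(G/Z(G)) \le \dd(G) \le 4$ by Theorem \ref{t:frattini}. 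Hence $r_i \le 4 r_{i-1}$ for $i \ge 3$. At $i = 2$ the same map, combined with $x^p \in Z(G)$ (as $\exp Z_2(G)/Z(G) = p$) forcing $[x,g]^p = [x^p,g] = 1$, takes values in the socle $Z(G)[p]$ and yields $r_2 \le 4\,\dd(Z(G))$. Iterating gives $r_i \le 4^{i-1}\dd(Z(G))$ for all $i \ge 2$.

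The final ingredient is a bound on $\dd(Z(G))$. As $G$ is metabelian (Theorem \ref{t:frattini}), $Z(G)$ is a subgroup of the abelian group $[G,G]$, so $\dd(Z(G)) \le \dd([G,G])$; and $\dd([G,G])$ is at most the total number of basic commutators of weights $2,\dots,c$ on $\dd(G) \le 4$ letters, a quantity bounded in terms of $c$ alone. Combining the estimates bounds every $r_i$ together with $|Z(G)| \le p^{2\dd(Z(G))}$ in terms of $p$ and $c$, hence also $|G|$, completing the reduction. I expect the main obstacle to be exactly this control of the ``width'' of $G$: the exponent bounds of Corollaries \ref{c:center} and \ref{c:exphigher} limit the heights of the central factors, but one must still propagate the four-generator bound of Theorem \ref{t:frattini} down the central series — via the commutator-homomorphism embedding — to keep their ranks under control.
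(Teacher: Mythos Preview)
Your argument is correct, but the paper takes a shorter route. Rather than bounding each upper central factor separately, the paper observes that the exponent bounds from Corollaries~\ref{c:center} and~\ref{c:exphigher} together force $\exp G \mid p^{c+1}$: for $g \in G = Z_c(G)$ one has $g^{p} \in Z_{c-1}(G)$, $g^{p^2} \in Z_{c-2}(G)$, \dots, $g^{p^{c-1}} \in Z(G)$, and then $g^{p^{c+1}} = 1$. Combined with Theorem~\ref{t:frattini}, every stem $\B_0$-minimal $p$-group of class $c$ is a quotient of the $c$-nilpotent quotient of the free $4$-generator Burnside group $B(4,p^{c+1})$, which is finite; hence there are only finitely many such groups and therefore only finitely many families.

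Your approach trades this single finiteness appeal for an explicit recursive bound on $|G|$: you propagate the four-generator bound down the upper central series via the commutator embedding $Z_i/Z_{i-1} \hookrightarrow \Hom(G, Z_{i-1}/Z_{i-2})$, getting $r_i \le 4r_{i-1}$, and anchor the recursion with $\dd(Z(G)) \le \dd([G,G])$ bounded by a basic-commutator count. This is longer but entirely self-contained and yields a concrete (if crude) estimate on $|G|$, whereas the paper's argument is shorter but invokes the finiteness of nilpotent quotients of free Burnside groups as a black box. Both routes use exactly the same structural inputs (Theorem~\ref{t:frattini}, Corollaries~\ref{c:center} and~\ref{c:exphigher}); they differ only in how these are assembled.
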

\begin{proof}
The exponent of a $\B_0$-minimal $p$-group of class at most $c$ is bounded above by $p^{c+1}$ using Corollary \ref{c:center} and Corollary \ref{c:exphigher}. Since $\B_0$-minimal groups may be generated by at most $4$ elements by Theorem \ref{t:frattini}, each one is an epimorphic image of the $c$-nilpotent quotient of the free 4-generator Burnside group $B(4, p^{c+1})$ of exponent $p^{c+1}$, which is a finite group. As a $\B_0$-minimal isoclinism family is determined by its stem groups, the result follows.
\end{proof}

Lastly, we say something about the fact that the Frattini subgroup of a $\B_0$-minimal group $G$ is abelian. The centralizer $C = C_G(\Phi(G))$ is of particular interest, as a classical result of Thompson, cf. \cite{Fei63}, states that $C$ is a critical group. The elements of $G$ whose centralizer is a maximal subgroup of $G$ are certainly contained in $C$. These elements have been studied by Mann in \cite{Man06}, where they are termed to have {\em minimal breadth}. We follow Mann in denoting by ${\mathcal M}(G)$ the subgroup of $G$ generated by the elements of minimal breadth. Later on, we will be dealing separately with $2$-groups. It is shown in \cite[Theorem 5]{Man06} that in this case, the nilpotency class of ${\mathcal M}(G)$ does not exceed $2$, and that the group ${\mathcal M}(G)/Z(G)$ is abelian. We show that for $\B_0$-minimal groups, the group ${\mathcal M}(G)$ is actually abelian.

\begin{proposition}
\label{p:minimalbreadth}
Let $G$ be a $\B_0$-minimal $2$-group. Then ${\mathcal M}(G)$ is abelian.
\end{proposition}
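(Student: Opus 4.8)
The plan is to assume $\mathcal{M}(G)$ is nonabelian and force a contradiction with the classification in Theorem \ref{t:class2}. By Mann's result \cite[Theorem 5]{Man06}, $\mathcal{M}(G)$ has class at most $2$ and $[\mathcal{M}(G),\mathcal{M}(G)] \le Z(G)$. Since $\mathcal{M}(G)$ is generated by elements of minimal breadth, if it is nonabelian then some two of these fail to commute, so there are $x,y$ of minimal breadth with $1 \neq z := [x,y] \in Z(G)$. Because $z$ is central we have $[x,y^n]=[x,y]^n$, and any noncentral power $u^k$ of a minimal breadth element $u$ is again of minimal breadth (as $C_G(u) \le C_G(u^k) < G$ with $C_G(u)$ maximal forces $C_G(u^k)=C_G(u)$); so after replacing $y$ by a suitable $2$-power I may assume $z$ has order $2$.

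The first key step is to locate $x,y$ in $Z_2(G)$. The conjugacy class of $x$ has size $|G:C_G(x)|=2$, so the map $g \mapsto [x,g]$ takes only the values $1$ and $z$; since $z$ is central this map is a homomorphism onto $\langle z\rangle$ with kernel the maximal subgroup $C_G(x)$. Hence $[x,G]=\langle z\rangle \le Z(G)$ and $x \in Z_2(G)$, and symmetrically for $y$. Now $C_G(x)$ and $C_G(y)$ are distinct maximal subgroups (note $y \in C_G(y)\setminus C_G(x)$), so $K := C_G(x)\cap C_G(y)$ has index $4$; moreover $Z(G)\le K$, and every element of $K$ commutes with both $x$ and $y$, so $\langle x,y\rangle$ and $K$ commute elementwise. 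Since $x^2,y^2,z\in Z(G)\le K$, the images of $x,y$ generate $G/K\cong \Z/2\Z \times \Z/2\Z$, and every $g\in G$ can be written as $x^iy^jk$ with $k\in K$ and $i,j\in\{0,1\}$.

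Next I apply Lemma \ref{l:generators} to the central element $z=[x,y]$ of order $2$: there exist $a,b\in G$ with $G=\langle a,b,x,y\rangle$ and $[a,b]=z$. (Only these two conclusions are needed, not the inequality $a\curlywedge b\neq x\curlywedge y$.) Writing $a = x^{i_1}y^{j_1}k_1$ and $b = x^{i_2}y^{j_2}k_2$ with $k_1,k_2\in K$, the elementwise commuting of $\langle x,y\rangle$ with $\langle k_1,k_2\rangle$ makes the commutator split as
\[
	z = [a,b] = [x^{i_1}y^{j_1},x^{i_2}y^{j_2}]\,[k_1,k_2] = z^{\,i_1 j_2 - j_1 i_2}\,[k_1,k_2],
\]
where the first factor is computed in the class-$2$ group $\langle x,y\rangle$ with central $[x,y]=z$. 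Consequently $[k_1,k_2]\in\langle z\rangle$.

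This collapses the derived subgroup. Indeed $G=\langle a,b,x,y\rangle=\langle x,y,k_1,k_2\rangle$, and every commutator of two of these four generators lies in $\langle z\rangle$: the pairings of $x$ or $y$ against $k_1,k_2$ are trivial (both $k_i\in K$), $[x,y]=z$, and $[k_1,k_2]\in\langle z\rangle$. As $\langle z\rangle$ is central, it follows that $[G,G]=\langle z\rangle\cong\Z/2\Z$, so $G$ has nilpotency class $2$. But Theorem \ref{t:class2} shows every $\B_0$-minimal group of class $2$ is isoclinic to $G_1$ or $G_2$, whose derived subgroups have order $p^3$; since isoclinism preserves the isomorphism type of $[G,G]$, this forces $|[G,G]|=p^3$, contradicting $|[G,G]|=2$. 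Hence $\mathcal{M}(G)$ is abelian. I expect the main obstacle to be the structural step placing $x,y$ in $Z_2(G)$ and recognising that the generators delivered by Lemma \ref{l:generators} are absorbed into the elementwise-commuting product $\langle x,y\rangle\cdot K$; once this is in place, the collapse of $[G,G]$ to order $2$ and the appeal to the class-$2$ classification are immediate, and notably the curly-wedge inequality from the lemma is never needed.
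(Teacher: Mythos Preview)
Your proof is correct and takes a genuinely different route from the paper's. Both arguments begin identically---using Mann's theorem to place $z=[x,y]$ in $Z(G)$, arranging $|z|=2$, and invoking Lemma~\ref{l:generators}---and both finish by forcing $G$ into nilpotency class $2$ and appealing to Theorem~\ref{t:class2}. The middle is where they diverge.

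The paper exploits the full conclusion of Lemma~\ref{l:generators}, in particular the inequality $a\curlywedge b\neq g\curlywedge h$. It then runs a short case analysis on whether $[g,a]$ and $[g,b]$ vanish: using that $g$ has minimal breadth (so $[g,G]$ has size~$2$), one can adjust $a$ so that $[g,a]=1$, and if $[g,b]\neq 1$ one produces a new nontrivial element of $\B_0(G)$ from the curly-wedge inequality, forcing $G=\langle g,a,b\rangle$ and ultimately class~$2$. Repeating for $h$ yields $[g,a]=[g,b]=[h,a]=[h,b]=1$, hence $[G,G]=\langle z\rangle$.

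Your argument instead extracts the structural fact $x,y\in Z_2(G)$ directly from minimal breadth together with $z\in Z(G)$, and then uses the coset decomposition $G=\langle x,y\rangle K$ with $K=C_G(x)\cap C_G(y)$ to write $a,b$ as products and compute $[a,b]$ explicitly. This collapses $[G,G]$ to $\langle z\rangle$ without any case analysis and without ever touching the curly-wedge inequality---as you note, only the generation clause of Lemma~\ref{l:generators} is used. The trade-off: the paper's approach stays closer to the $\B_0$-minimality machinery used throughout and illustrates how the curly wedge drives generation arguments, while yours is a cleaner, purely group-theoretic reduction that makes the role of $Z_2(G)$ transparent. A minor remark: your step reducing $|z|$ to $2$ by passing to a power of $y$ is in fact redundant, since your own observation that $[x,G]=\{1,z\}$ is the image of a homomorphism into $Z(G)$ already forces $z^2=1$.
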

\begin{proof}
Suppose that there exist elements $g,h \in G$ of minimal breadth with $[g,h] \neq 1$. Since the group ${\mathcal M}(G)/Z(G)$ is abelian, we have $[g,h] \in Z(G)$. Without loss of generality, we may assume that $[g,h]$ is of order $2$, otherwise replace $g$ by its power. Putting $z = [g,h]$ and applying Lemma \ref{l:generators}, there exist elements $a,b \in G$ such that $G = \langle g,h,a,b \rangle$ and $a \curlywedge b \neq g \curlywedge h$ and $[a,b] = z$. Suppose that $[g,a] \neq 1$ and $[g,b] \neq 1$. Since $g$ is of minimal breadth, we have $[g,a] = [g,b]$ and so the element $\tilde a = b^{-1}a$ centralizes $g$. This gives $z = [a,b] = [\tilde a,b]$ and similarly $\tilde a \curlywedge b = a \curlywedge b$. By $\B_0$-minimality, it follows that $G = \langle g, h, \tilde a, b \rangle$ with $[g,\tilde a] = 1$. We may thus {\em a priori} assume that $[g,a]$ = 1. Now suppose $[g,b] \neq 1$. Since $g$ is of minimal breadth, we have $[g,b] = [g,h]$ and so the element $hb^{-1}$ centralizes $g$. This gives $g \curlywedge h = g \curlywedge (hb^{-1})b = g \curlywedge b$. The product $(g \curlywedge b)(a \curlywedge b)^{-1}$ is thus a nontrivial element of $\B_0(G)$. By $\B_0$-minimality, it follows that $G = \langle g, a, b \rangle$ with $[g,a] = 1$ and $[g,b] = [a,b] \in Z(G)$. Putting $\tilde g = g a^{-1}$, we have $[\tilde g, a] = 1$ and $[\tilde g,b] = 1$ with $G = \langle \tilde g, a, b \rangle$. This implies $[G,G] = \langle [a,b] \rangle \leq Z(G)$, so $G$ is of nilpotency class $2$. Therefore $G$ belongs to one of the two families given in Theorem \ref{t:class2}. The groups in both of these families have their derived subgroups isomorphic to $(\Z/2\Z)^3$. This is a contradiction, so we must have $[g,b] = 1$. Hence $G = \langle g, h, a, b \rangle$ with $[g,a] = [g,b] = 1$ and $[g,h] = z \in Z(G)$. By the same arguments applied to $h$ instead of $g$, we also have $[h,a] = [h,b] = 1$. This implies $[G,G] = \langle [a,b] \rangle \leq Z(G)$, so $G$ is again of class $2$, giving a final contradiction.
\end{proof}

\section{Commuting probability}
\label{s:cp}

\noindent This section is devoted to proving Theorem \ref{t:cp-p}.
The restrictions on the structure of $\B_0$-minimal groups, for the most part Theorem \ref{t:frattini}, are used to reduce the claim to groups belonging to isoclinism families of smallish rank.
These special cases are dealt with in the following lemma.

\begin{lemma}
\label{l:smallcp}
Let $G$ be a finite $p$-group belonging to an isoclinism family of rank at most $6$ for odd $p$, or at most $7$ for $p=2$. If $\cp(G) > (2p^2 + p - 2)/p^5$, then $\B_0(G)$ is trivial.
\end{lemma}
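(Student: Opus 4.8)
The approach is to exploit that both the commuting probability and the Bogomolov multiplier are invariants of the isoclinism family. For $\B_0$ this is \cite{Mor14}, and for $\cp$ it holds because the commuting pairs are exactly those mapped to $1$ by the commutator pairing $G/Z(G)\times G/Z(G)\to[G,G]$: since $[x,y]$ depends only on the cosets $xZ(G),yZ(G)$, one has $\cp(G)=c/|G/Z(G)|^2$, where $c$ is the number of pairs in $G/Z(G)$ annihilated by this pairing, and isoclinism preserves both $c$ and $|G/Z(G)|$. Consequently I may compute both quantities on a stem group and assume from the outset that $G$ is a stem group of its family, of order at most $p^6$ for odd $p$ and at most $2^7$ for $p=2$. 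If $\B_0(G)=0$ there is nothing to prove, so I assume $\B_0(G)\neq0$ and aim to establish the contrapositive, namely $\cp(G)\le(2p^2+p-2)/p^5$.

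Under this assumption the proof of Proposition \ref{p:smallb0mins} pins the family of $G$ down to a finite list: for odd $p$ the families $\Phi_{10},\Phi_{18},\Phi_{20},\Phi_{21},\Phi_{36},\Phi_{38},\Phi_{39}$ of \cite{Jam80}, and for $p=2$ the family $\Phi_{16}$ together with the rank-$7$ families $\Phi_{30},\Phi_{31},\Phi_{37},\Phi_{39},\Phi_{43},\Phi_{58},\Phi_{60},\Phi_{80},\Phi_{106},\Phi_{114}$ of \cite{Jam90}. The plan is to verify $\cp\le(2p^2+p-2)/p^5$ on each of these, so that the hypothesis $\cp>(2p^2+p-2)/p^5$ can never coexist with $\B_0\neq0$.

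It suffices to check the bound on the $\B_0$-minimal families in the list, namely $\Phi_{10},\Phi_{18},\Phi_{20},\Phi_{21},\Phi_{36}$ for odd $p$ and $\Phi_{16},\Phi_{30},\Phi_{31},\Phi_{37},\Phi_{39},\Phi_{80}$ for $p=2$. For each of these I would take the explicit stem presentation from \cite{Jam80,Jam90}, write a general element in normal form, and read off the order of its centralizer precisely as was done for the class-$2$ groups in the proof of Theorem \ref{t:class2}; summing $|C_G(x)|$ over a transversal yields $\kk(G)$ and hence $\cp(G)=\kk(G)/|G|$. For $p=2$ the families $\Phi_{30}$ and $\Phi_{31}$ are exactly the families of the stem groups $G_1,G_2$ of Theorem \ref{t:class2}, so their commuting probabilities $(p^3+2p^2-p-1)/p^6$ and $(2p^2+p-2)/p^5$ are already recorded there; the remaining families (including all the odd-$p$ ones, which are genuinely different groups from the order-$p^7$ ones of Theorem \ref{t:class2}) require the same style of computation, and in every case the outcome is at most $(2p^2+p-2)/p^5$.

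The families that are not $\B_0$-minimal are then cleared without further computation, using that passing to a subgroup or a quotient can only increase the commuting probability. By the analysis in the proof of Proposition \ref{p:smallb0mins}, each of $\Phi_{38},\Phi_{39}$ (odd $p$) has a central quotient in $\Phi_{10}$, and each of $\Phi_{43},\Phi_{58},\Phi_{60},\Phi_{106},\Phi_{114}$ ($p=2$) has a maximal subgroup or maximal quotient in $\Phi_{16}$; hence $\cp(G)\le\cp(S)\le(2p^2+p-2)/p^5$ for the corresponding section $S$, already treated above. I expect the genuine obstacle to be the centralizer bookkeeping for the families of nilpotency class at least $3$ — $\Phi_{10},\Phi_{21},\Phi_{36}$ for odd $p$, and $\Phi_{16},\Phi_{37},\Phi_{39},\Phi_{80}$ for $p=2$ — where the normal form is more delicate than in the class-$2$ situation and the bound must be confirmed uniformly in $p$.
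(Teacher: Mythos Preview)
Your overall strategy coincides with the paper's: reduce to isoclinism families via the invariance of both $\cp$ and $\B_0$, list the families with nontrivial multiplier from Proposition~\ref{p:smallb0mins} (and its proof), and check that each has $\cp\le(2p^2+p-2)/p^5$. The difference is in the last step. The paper does no centralizer bookkeeping at all: the commuting probabilities of \emph{all} isoclinism families of the relevant ranks are already tabulated in \cite[Table~4.1]{Jam80} for odd $p$ and \cite[Table~II]{Jam90} for $p=2$, so one simply reads off that the bound is attained at $\Phi_{10},\Phi_{18},\Phi_{20}$ (odd $p$) and $\Phi_{16},\Phi_{31}$ ($p=2$), and is strictly undercut by the remaining families. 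Your plan to compute $\kk(G)$ by hand from stem presentations would work in principle, and your monotonicity reduction via sections to the $\B_0$-minimal families is correct (since $\cp$ does not decrease under passage to subgroups or quotients \cite{Gur06}), but both are unnecessary once you know these tables exist. As written your argument is only a plan---the class $\ge3$ computations you flag as the ``genuine obstacle'' are never actually performed---so citing the tables is both shorter and what actually closes the proof. Incidentally, the isoclinism invariance of $\cp$ that you rederive is exactly the content of \cite{Les95}, which the paper simply cites.
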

\begin{proof}
Both commuting probability and the Bogomolov multiplier are isoclinism invariants, see \cite{Les95,Mor14}.
It thus suffices to verify the lemma for the isoclinism families of groups with nontrivial multipliers given in \cite{ARXIVREF,Che13}.
For odd primes, commuting probabilities of such families are given in \cite[Table 4.1]{Jam80}.
The bound $(2p^2 + p - 2)/p^5$ is attained with the families $\Phi_{10}$, $\Phi_{18}$ and $\Phi_{20}$, while the rest of them have smaller commuting probabilities.
Similarly, commuting probabilities of such families of $2$-groups of rank at most $7$ are given in \cite[Table II]{Jam90}.
The bound $1/4$ is attained with the families $\Phi_{16}$ and $\Phi_{31}$, while the rest indeed all have smaller commuting probabilities.
This proves the lemma.
\end{proof}

\begin{proof}[Proof of Theorem \ref{t:cp-p}]
Assume that $G$ is a $p$-group of the smallest possible order satisfying $\cp(G) > (2p^2 + p - 2)/p^5$ and $\B_0(G)\neq 0$.
As both commuting probability and the Bogomolov multiplier are isoclinism invariants, we can assume without loss of generality that $G$ is a stem group.
By \cite{Gur06}, commuting probability of a subgroup or a quotient of $G$ exceeds $(2p^2 + p - 2)/p^5$, so all proper subgroups and quotients of $G$ have a trivial multiplier by minimality of $G$. 
This implies that $G$ is a stem $\B_0$-minimal group.
By Lemma \ref{l:smallcp}, we may additionally assume that $G$ is of order at least $p^7$ for odd primes $p$, and $2^8$ for $p=2$.

Suppose first that $G$ is of nilpotency class $2$.
By Theorem \ref{t:class2}, $G$ belongs to one of the isoclinism families given by the two stem groups in the theorem.
The groups in both of these families have commuting probability at most $(2p^2 + p - 2)/p^5$, which is in conflict with the restriction on $\cp(G)$.
So we may assume from now on that the group $G$ is of nilpotency class at least $3$.
Hence $G$ has an abelian Frattini subgroup of index at most $p^3$ by Theorem \ref{t:frattini}.
Note also that $|G:\Phi(G)| \geq p^2$, as $G$ is not cyclic. 

We first examine the case $|G:\Phi(G)| = p^3$.
In light of Lemma \ref{l:generators}, the generators $g_1$, $g_2$, $g_3$ of $G$ may be chosen in such a way that the commutator $[g_1, f] = [g_3, g_2]$ is central and of order $p$ for some $f \in \gamma_{c-1}(G) \leq \Phi(G)$.
Put 
\[
z = \min \{  |G:C_G(g_1^k \phi)| \mid 0 < k < p, \, \phi \in \langle g_2, g_3, \Phi(G) \rangle \}	
\]
and let $(k_1, \phi_1)$ be the pair at which the minimum is attained. 
We have $\phi_1 \equiv g_2^{\alpha_2} g_3^{\alpha_3}$ modulo $\Phi(G)$ for some $0 \leq \alpha_2, \alpha_3 < p$.
After possibly replacing $g_2$ by $g_2^{\alpha_2} g_3^{\alpha_3}$, we may assume that not both $\alpha_2, \alpha_3$ are nonzero, hence $\alpha_2 = 0$ and $\alpha_3 = 1$ without loss of generality.
Replacing $g_1$ by $\tilde g_1 = g_1^{k_1} g_3$, $f$ by $\tilde f = f^{k_1}$, and $g_2$ by $\tilde g_2 = g_2^{k_1} \tilde f$, we still have $G = \langle \tilde g_1, \tilde g_2, g_3 \rangle$ and $[\tilde g_1, \tilde f] = [g_1, f]^{k_1} [g_3, \tilde f] = [g_3, g_2^{k_1}][g_3, \tilde f] = [g_3, \tilde g_2]$ since $f \in \gamma_{c-1}(G)$.
The minimum $\min \{  |G:C_G(\tilde g_1 \phi)| \mid \phi \in \langle \tilde g_2, g_3, \Phi(G) \rangle \}$ is, however, now attained at $(1, g_3^{-1} \phi_1)$ with $g_3^{-1} \phi_1 \in \Phi(G)$.
We may therefore assume that $k_1=1$ and $\phi_1 \in \Phi(G)$.
Moreover, replacing $g_1$ by $\tilde g_1 = g_1 \phi_1$, we have both $[\tilde g_1, f] = [g_1, f] = [g_3, g_2]$ and $|G:C_G(\tilde g_1)| = z$, so we may actually assume that $\phi_1 = 1$.
Next, put 
$
x = \min \{  |G:C_G(\phi)| \mid \phi \in \langle g_2, g_3, \Phi(G) \rangle \backslash \Phi(G) \}	
$
with the minimum being attained at the pair $g_2^\alpha g_3^\beta \phi_0$ with $\phi_0 \in \Phi(G)$.
Replace the generators $g_2, g_3$ by setting $\tilde g_3 = g_2^\alpha g_3^\beta$ and choosing an element $\tilde g_2$ arbitrarily as long as $\langle g_2, g_3, \Phi(G) \rangle = \langle \tilde g_2, \tilde g_3, \Phi(G) \rangle$ and $[g_3, g_2] = [g_1, f^\kappa]$ for some $\kappa$.
This enables us to assume that the minimum $\min \{  |G:C_G(\phi)| \mid \phi \in \langle g_2, g_3, \Phi(G) \rangle \backslash \Phi(G) \}$ is attained at $g_3 \phi_0$ for some $\phi_0 \in \Phi(G)$.
Lastly, put 
\[
y = \min \{  |G:C_G(g_2^k \phi)| \mid 0 < k < p, \, \phi \in \langle g_3, \Phi(G) \rangle \}	
\]
with the minimum being attained at the pair $(k_2,\phi_2)$.
Writing $\phi_2 \equiv g_3^{\alpha_3}$ modulo $\Phi(G)$ and then replacing $g_2$ by $\tilde g_2 = g_2^{k_2} g_3^{\alpha_3}$ and $f$ by $\tilde f = f^{k_2}$ yields $G = \langle g_1, \tilde g_2, g_3 \rangle$ and $[g_1, \tilde f] = [g_3, g_2]^{k_2} = [g_3, \tilde g_2]$.
We may thus {\em a priori} assume that $k_2 = 1$ and $\phi_2 \in \Phi(G)$.
Note also that
\[
x = \min \{  |G:C_G(g_3^k \phi)| \mid 0 < k < p, \, \phi \in \Phi(G) \}	
\]
and the minimum is attained at $(1, \phi_0)$.
Moreover, by the very construction of $g_3$, we have $x \leq y$.

When any of the numbers $x$, $y$, $z$ equals $p$, the centralizer of the corresponding element is a maximal subgroup of $G$, and thus contains $\Phi(G)$.
In the case $z = p$, this implies $[g_1, f] = 1$, which is impossible, so we must have $z \geq p^2$.
As a consequence, ${\mathcal M}(G) \leq \langle g_2, g_3, \Phi(G) \rangle$. When $p=2$, the group ${\mathcal M}(G)$ is abelian by Proposition \ref{p:minimalbreadth}, and so the factor group $G/{\mathcal M}(G)$ is not cyclic \cite{Bog88}. This  implies that not both $g_2$ and $g_3$ belong to ${\mathcal M}(G)$ in this case.

Assume first that $z = p^2$.
This means that $|[g_1, G]| = p^2$.
As the group $G$ is of nilpotency class at least $3$, not both the commutators $[g_2, g_1]$ and $[g_3, g_1]$ belong to $\gamma_3(G)$.
By possibly replacing $g_2$ by $g_3$ (note that by doing so, we lose the assumption $x \leq y$, but we will not be needing it in this step), we may assume that $[g_3, g_1] \notin \gamma_3(G)$.
Hence $[g_1, G] = \{ [g_1, g_3^\alpha f^\beta] \mid 0 \leq \alpha, \beta < p \}$.
It now follows that for any $g \in \gamma_2(G)$, we have $[g_1, g] \in \{ [g_1, f^{\beta}] \mid 0 \leq \beta < p \}$, because the commutator $[g_1, g]$ itself belongs to $\gamma_3(G)$.
This implies $[g_1, \gamma_2(G), G] = 1$.
If the nilpotency class of $G$ it at least $4$, we have $[\gamma_{c-1}(G), g_1] = [\gamma_{c-2}(G), G, g_1] = [\gamma_{c-2}(G), g_1, G]$ as the group $G$ is metabelian by Theorem \ref{t:frattini}.
This gives $[\gamma_{c-1}(G), g_1] \leq [g_1, \gamma_2(G), G] = 1$, a contradiction with $[g_1, f] \neq 1$.
Hence $G$ must be of nilpotency class $3$. 

Consider the case when $p = 2$ first.
As the Frattini subgroup of $G$ is abelian, we have $[g_1^2, g_3^2] = 1$, which in turn gives $[g_1^4, g_3] = [g_1, g_3]^4 [g_1, g_3, g_1]^2 = 1$, and similarly $[g_1^4, g_2] = [g_3^4, g_1] = 1$.
Hence $g_1^4$, $g_2^4$, $g_3^4$ are all central in $G$, and therefore belong to $[G,G]$ as the group $G$ is stem.
The factor group $\gamma_2(G)/\gamma_3(G)$ is generated by the commutator $[g_3, g_1]$, and we either have $[g_2, g_1] = [g_3, g_1]$ or $[g_2, g_1] \in \gamma_3(G)$, since $|[g_1, G]| = z = 4$ and thus $[g_1, G] = \{ 1, [g_1, f], [g_1, g_3], [g_1, g_3f] \}$.
Moreover, we can assume that $f = [g_3, g_1]$.
Note that $[g_3^2, g_1] \in \gamma_3(G)$, implying $[g_3, g_1]^2 \in \gamma_3(G)$ and therefore $|\gamma_2(G)/\gamma_3(G)| = 2$.
The group $\gamma_3(G)$ is generated by the commutators $[g_3, g_1, g_1]$, $[g_3, g_1, g_2]$ and $[g_3, g_1, g_3]$, all being of order at most $2$.
If $[g_2, g_1] \in \gamma_3(G)$, we have $[g_3, g_1, g_2] = [g_3, g_1]^{-1} [g_3^{g_2}, g_1^{g_2}] = 1$, and if $[g_2, g_1] \notin \gamma_3(G)$, then we have $[g_3, g_1]^{g_2} = [g_3, g_1 [g_1, g_2]] = [g_3, g_1 [g_3, g_1]] = [g_3, g_1] [g_3, g_1, g_3]$, which gives $[g_3, g_1, g_2] = [g_3, g_1, g_3]$.
Replacing $g_2$ by $\tilde g_2 = g_2 g_3$ therefore enables us to assume $[g_3, g_1, g_2] = 1$.
This shows that $\gamma_3(G)$ is of order at most $4$, and the Hall-Witt identity \cite[Satz III.1.4]{Hup67} gives $[g_2, g_1, g_3] = 1$.
Note that $[g_3^2, g_1] \in \gamma_3(G)$ gives $[g_3^2, g_1] = [g_3, g_1, g_1]^k$ for some $k \in \{ 0, 1 \}$, hence $g_3^2 [g_3, g_1]^{-k}$ is central in $G$.
Therefore $g_3^2 \in [G,G]$ as $G$ is a stem group.
The same reasoning shows that $[g_2^2 [g_3, g_1]^k, g_1] = 1$ for some $k \in \{ 0,1 \}$.
If $k = 0$, then $g_2^2$ is central in $G$ and hence belongs to $[G,G]$.
This gives $|G| \leq 2^7$, a contradiction.
Hence $k = 1$ and we have $[g_2^2, g_1] = [g_3, g_1, g_1]$.
This further implies $[g_2^2, g_1] = [g_2, g_1]^2 [g_2, g_1, g_2] = [g_3, g_1]^2 [g_3, g_1, g_2] = [g_3, g_1]^2$, hence $[g_3, g_1^2] = [g_3, g_1]^2 [g_3, g_1, g_1] = [g_3, g_1]^2 [g_2^2, g_1] = 1$.
It follows from here that $[g_2, g_1^2] = [g_2, g_1]^2 [g_2, g_1, g_1] = [g_3, g_1]^2 [g_3, g_1, g_1] = [g_3, g_1^2] = 1$, and so $g_1^2$ is central in $G$.
This finally gives $|G| \leq 2^7$, a contradiction.

Suppose now that $p$ is odd. Commutators and powers relate to give the equality $[g_3^p, g_1] = [g_3, g_1]^p [g_3, g_1, g_3]^{\binom{p}{2}} = [g_3, g_1]^p = [g_3, g_1^p]$.
Assuming $g_3^p \curlywedge g_1 \neq g_3 \curlywedge g_1^p$ and invoking $\B_0$-minimality implies $G = \langle g_1, g_3 \rangle$, a contradiction.
Hence $g_3^p \curlywedge g_1 = g_3 \curlywedge g_1^p$.
Note that $[g_3, g_1]^p$ belongs to $\gamma_3(G)$, so we must have $[g_3, g_1]^p = [g_1, f^k]$ for some $k$.
Assuming $g_3^p \curlywedge g_1 \neq g_1 \curlywedge f^k$ and invoking $\B_0$-minimality gives $G = \langle g_1, g_3^p, f^k \rangle = \langle g_1 \rangle$, which is impossible.
Hence we also have $g_3^p \curlywedge g_1 = g_1 \curlywedge f^k$.
Recall, however, that $g_1 \curlywedge f \neq g_3 \curlywedge g_2$, which gives $g_3 \curlywedge g_1^p \neq g_3^k \curlywedge g_2$ whenever $k > 0$.
Referring to $\B_0$-minimality, a contradiction is obtained, showing that $k = 0$ and hence $[g_3, g_1]^p = 1$.
An analogous argument shows that $[g_2, g_1]^p = 1$.
The elements $g_1^p$, $g_2^p$, $g_3^p$ are therefore all central in $G$, which implies that $|G/[G,G]| = p^3$ as $G$ is a stem group.
Now consider the commutator $[g_2, g_1]$.
Should it belong to $\gamma_3(G)$, we have $\gamma_2(G) = \langle [g_3, g_1], [g_3, g_1, g_1], [g_3, g_1, g_3] \rangle$, since $[g_3, g_1, g_2] = 1$ by the Hall-Witt identity.
The latter gives the bound $|G| = |G/[G,G]| \cdot |[G,G]| \leq p^6$, a contradiction.
Now assume that $[g_2, g_1]$ does not belong to $\gamma_3(G)$.
By the restriction $|[g_1, G]| = p^2$, we must have $[g_2, g_1] \equiv [g_3^k, g_1]$ modulo $\gamma_3(G)$ for some $k > 0$.
Hence $|\gamma_2(G)/\gamma_3(G)| = p$ and $\gamma_3(G) = \langle [g_3, g_1, g_1], [g_3, g_1, g_2], [g_3, g_1, g_3] \rangle$ .
As in the case when $p=2$, we now have $[g_3, g_1]^{g_2} = [g_3, g_1 [g_1, g_2]] = [g_3, g_1 [g_1, g_3^k]] = [g_3, g_1, g_3]^{-k} [g_3, g_1]$, which furthermore gives $[g_3, g_1, g_2] = [g_3, g_1, g_3]^{-k}$.
All-in-all, we obtain the bound $|\gamma_3(G)| \leq p^2$ and therefore $|G| \leq p^6$, reaching a final contradiction.

Assume now that $z \geq p^3$.
We count the number of conjugacy classes in $G$ with respect to the generating set $g_1$, $g_2$, $g_3$.
The central elements $Z(G)$ are of class size $1$, and the remaining elements of $\Phi(G)$ are of class size at least $p$.
Any other element of $G$ may be written as a product of powers of $g_1$, $g_2$, $g_3$ and an element belonging to $\Phi(G)$.
These are of class size at least $x$, $y$, $z$, depending on the first nontrivial appearance of one of the generators.
Summing up, we have
\[
	\kk(G) \leq |Z(G)| + (|\Phi(G)| - |Z(G)|)/p + ( (p-1)/x  + p(p-1)/y + p^2(p-1)/z ) |\Phi(G)|.
\]
Note that since $G$ is a $3$-generated stem group of nilpotency class at least $3$, we have $|G/Z(G)| = |G/[G,G]| \cdot |[G,G]/Z(G)| \geq p^4$.
Applying this inequality, the commuting probability bound $(2p^2 + p - 2)/p^5 < \cp(G) = \kk(G)/|G|$, and the information on the number of generators $|G:\Phi(G)| = p^3$, we obtain
\begin{equation}
\label{e:centerindex}
	(2p + 1)/p^4 < 1/p^2 x + 1/p y + 1/z.
\end{equation}

Assume first that $x \geq p^2$.
We thus also have $y \geq p^2$, and inequality \eqref{e:centerindex} gives $z < p^3$, which is impossible.
So we must have $x = p$.
In particular, the generator $g_3$ centralizes $\Phi(G)$.
We may thus replace $g_2$ by $\tilde g_2 = g_2 \phi_2$ and henceforth assume that $|[g_2, G]| = y$.
When $p = 2$, not both $g_2$ and $g_3$ belong to ${\mathcal M}(G)$, so we have $y \geq 4$ in this case.
For odd primes $p$, assuming $y = p$ makes it possible to replace $g_3$ by $g_3 \phi_3$ and hence assume $|G:C_G(g_3)| = p$.
This implies that the commutators $[g_1, g_2]$, $[g_3, g_1]$ and $[g_3, g_2]$ all belong to $\gamma_c(G)$, which restricts the nilpotency class of $G$ to at most $2$, a contradiction.
We therefore have $y = |[g_2, G]| \geq p^2$.
Inequality \eqref{e:centerindex} now gives $z < p^4$, which is only possible for $z = p^3$.
Plugging this value in \eqref{e:centerindex}, we  obtain $y < p^3$, so we must also have $y = p^2$.
This leaves us with the case $x = p$, $y = |[g_2, G]| = p^2$, and $z = |[g_1, G]| = p^3$.

The commutator $[g_3, g_2]$ is central in $G$, and we have $[g_3, g_1, g_2] = 1$ by the Hall-Witt identity.
This implies that $[g_3, g_1, g_1]^{g_2} = [g_3, g_1, g_1[g_1, g_2]] = [g_3, g_1, g_1]$, hence $[g_3, g_1, g_1, g_2] = 1$.
The same reasoning gives $[g_3, g_1, g_1, g_1, g_2] = 1$.
Note that we must have $[g_3, g_1, g_1, g_1, g_1] = 1$ since $|[g_1, G]| = p^3$.
The commutator $[g_3, g_1, g_1, g_1]$ is therefore central in $G$, and the same argument applies to $[g_2, g_1, g_1, g_1]$.
Note also that since $|[g_2, G]| = p^2$, the commutator $[g_2, g_1, g_2]$ is equal to a power of $[g_3, g_2]$, hence central in $G$.
All together, this shows that all basic commutators of length $4$ are central in $G$, which implies that $G$ is of nilpotency class at most $4$.

The restriction $|[g_2, G]| = p^2$ implies that $[g_2, g_1^p] = [g_3, g_2]^k$ for some $k$.
Assuming $g_2 \curlywedge g_1^p \neq g_3^k \curlywedge g_2$ and invoking $\B_0$-minimality gives $G = \langle g_2, g_3 \rangle$, which is impossible.
Hence $g_2 \curlywedge g_1^p = g_3^k \curlywedge g_2$.
When $k > 0$, this gives $g_2 \curlywedge g_1^p \neq g_1^k \curlywedge f$, hence $G = \langle g_2, g_1 \rangle$, a contradiction.
Therefore $k = 0$ and we conclude $[g_2, g_1^p] = 1$, so $g_1^p$ is central in $G$.
Further inspection of the group $G$ is now based on whether or not the commutator $[g_2, g_1]$ belongs to $\gamma_3(G)$.

Suppose first that $[g_2, g_1] \in \gamma_3(G)$.
When $p$ is odd, this restriction is used to obtain $[g_2^p, g_1] = [g_2, g_1]^p [g_2, g_1, g_2]^{\binom{p}{2}} = [g_2, g_1]^p = [g_2, g_1^p] = 1$, showing that the element $g_2^p$ is central in $G$.
Furthermore, we have $\gamma_2(G)/\gamma_3(G) = \langle [g_3, g_1] \rangle$, $\gamma_3(G)/\gamma_4(G) = \langle [g_3, g_1, g_1] \rangle$, and $\gamma_4(G) = \langle [g_3, g_1, g_1, g_1] \rangle$, with all of the factor group being of order $p$.
When the nilpotency class of $G$ equals $3$, we thus obtain the bound $|G| = |G/[G,G]| \cdot |\gamma_2(G)/\gamma_3(G)| \cdot |\gamma_3(G)| \leq p^6$ for odd $p$ and $|G| \leq 2^7$ for $p=2$, a contradiction.
Now let $[g_3, g_1, g_1, g_1] \neq 1$ and consider the commutator $[g_3^p, g_1]$.
Since $|[g_1, G]| = p^3$, we have $[g_1, g_3^p] = [[g_3, g_1]^k [g_3, g_1, g_1]^l, g_1]$ for some $k, l$.
This shows that $g_3^p [g_3, g_1]^{-k} [g_3, g_1, g_1]^{-l}$ is central in $G$.
Since $G$ is a stem group, we conclude that $|G/[G,G]| = p^3$ when $p$ is odd, and $|G/[G,G]| \leq 2^4$ when $p = 2$.
Applying the same bound as above gives $|G| \leq p^6$ for odd $p$ and $|G| \leq 2^7$ for $p = 2$, a contradiction.

Now assume that $[g_2, g_1] \notin \gamma_3(G)$.
Consider the commutator $[g_2, g]$ for some $g \in \gamma_2(G)$. 
Since $|[g_2, G]| = p^2$ and $[g_2, g] \in \gamma_3(G)$, we have $[g_2, g] = [g_3, g_2]^k$ for some $k$.
Assuming $g_2 \curlywedge g \neq g_3^k \curlywedge g_2$ and invoking $\B_0$-minimality gives $G = \langle g_2, g_3 \rangle$, a contradiction.
Hence $g_2 \curlywedge g = g_3^k \curlywedge g_2$, implying $g_2 \curlywedge g \neq g_1^k \curlywedge f$ whenever $k > 0$, and it follows from here by $\B_0$-minimality that $G = \langle g_2, g_1 \rangle$, another contradiction.
We therefore have $[g_2, g] = 1$, that is $[g_2, \gamma_2(G)] = 1$.
Now consider the commutator $[g_2^p, g_1]$. 
Since $[g_2^p, g_1] \equiv [g_2, g_1]^p \equiv [g_2, g_1^p] \equiv 1$ modulo $\gamma_3(G)$, we have $[g_2^p, g_1] = [g, g_1]$ for some $g \in [G,G]$.
As $G$ is a stem group, this implies that $g_2^p \in [G,G]$.
The same reasoning applied to $g_3$ shows that $g_3^p \in [G,G]$.
Hence $|G/[G,G]| = p^3$.
At the same time, the derived subgroup $[G,G]$ is generated by the commutators $[g_3, g_1], [g_2, g_1], [g_3, g_1, g_1], [g_2, g_1, g_1], [g_3, g_1, g_1, g_1], [g_2, g_1, g_1, g_1]$.
By $[g_2, \gamma_2(G)] = 1$, we have $[G,G] = [g_1, G]$ and therefore $|[G,G]| = p^3$.
All together, the bound $|G| \leq p^6$ is obtained, giving a final contradiction.

At last we deal with the case when $|G : \Phi(G)| = p^2$.
Let $g_1$ and $g_2$ be the two generators of $G$, satisfying $[g_1, f] = [g_3, g_2]$ for some $f \in \gamma_{c-1}(G)$.
As before, put $y = \min \{ |G:C_G(\phi)| \mid \phi \in \langle g_1, g_2, \Phi(G) \rangle \backslash \Phi(G) \}$.
After possible replacing the generators, we may assume
\[
y = \min \{ |G:C_G(g_2^k \phi)| \mid 0 < k < p, \, \phi \in \Phi(G) \} = |G:C_G(g_2)|.
\]
Additionally put
\[
z = \min \{ |G:C_G(g_1^k \phi)| \mid 0 < k < p, \, \phi \in \langle g_2, \Phi(G) \rangle \}
\]
with the minimum being attained at the pair $(1, 1)$ after possibly replacing $g_1$ and $g_3$ just as in the case when $|G:\Phi(G)| = p^3$.
Note that we have $y \leq z$ by construction.
When $y = p$, the subgroup $\langle g_2, \Phi(G) \rangle$ is a maximal abelian subgroup of $G$, which implies $\B_0(G) = 0$ by \cite{Bog88}, a contradiction. 
Hence $z,y \geq p^2$.

We now count the number of conjugacy classes in $G$. In doing so, we may assume $|G/Z(G)| \geq p^4$.
To see this, suppose for the sake of contradiction that $|G/Z(G)| \leq p^3$.
As the nilpotency class of $G$ is at least $3$, its central quotient $G/Z(G)$ must therefore be nonabelian of order $p^3$.
Since $G$ is a $2$-generated stem group, we thus have $|G/[G,G]| = p^2$.
Furthermore, the derived subgroup of $G$ is equal to $\langle [g_1, g_2], [g_1, g_2, g_1], [g_1, g_2, g_2] \rangle$ with $[g_1, g_2, g_1]$ and $[g_1, g_2, g_2]$ of order dividing $p$.
We thus obtain the bound $|G| = |G/\gamma_2(G)| \cdot |\gamma_2(G)/\gamma_3(G)| \cdot |\gamma_3(G)| \leq p^5$, a contradiction.
Applying the inequality $|G/Z(G)| \geq p^4$, the commuting probability bound and the information on the number of generators, the degree equation yields
\begin{equation}
\label{e:centerindex2}
	(p+1)/p^4 < 1/py + 1/z.
\end{equation}

Assuming $y \geq p^3$, we also have $z \geq p^3$, which is in conflict with inequality \eqref{e:centerindex2}.
Hence $y = p^2$, and inequality \eqref{e:centerindex2} additionally gives $z \leq p^3$. 
As in the case when $|G:\Phi(G)| = p^3$, the latter bound restricts the nilpotency class of $G$ to at most $4$.
Note that the commutator $[g_2, g_1, g_2]$ is either trivial or equals a power of $[g_3, g_2]$ since $|[g_2, G]| = p^2$.
We therefore have $\gamma_2(G)/\gamma_3(G) = \langle [g_2, g_1] \rangle$, $\gamma_3(G)/\gamma_4(G) = \langle [g_2, g_1, g_1] \rangle$, and $\gamma_4(G) = \langle [g_2, g_1, g_1, g_1] \rangle$ with all the groups being of order $p$.
Moreover, both $g_1^{p^2}$ and $g_2^{p^2}$ are central in $G$ as the Frattini subgroup is abelian.
When $p=2$, this already gives $|G| \leq 2^{4 + 3} = 2^7$, a contradiction.
Similarly, if the group $G$ is of nilpotency class $3$, we obtain $|G| \leq p^6$, another contradiction.
Now let $p$ be odd and let $G$ be of nilpotency class $4$. 
Note that we have $[g_2, g_1^p] = [g_3, g_2]^k$ for some $k$.
This in turn gives $[g_2^p, g_1] = [g_2, g_1]^p [g_2, g_1, g_2]^{\binom{p}{2}} = [g_2, g_1]^p$.
We also have $[g_2, g_1^p] = [g_2, g_1]^p [g, g_1]$ for some $g \in [G,G]$ that satisfies $[g_2, g] = 1$.
Combining the two, we obtain $[g_2^p, g_1] = [g_2, g_1^p] [g^{-1}, g_1] [g_2, g_1, g_2]^{\binom{p}{2}} = [g_1, f]^k [g^{-1},g_1] = [h, g_1]$ for some $h \in [G,G]$.
Since $f \in \gamma_3(G)$, we also have $[g_2, h] = 1$.
This shows that $g_2^p h^{-1} \in Z(G)$.
As the group $G$ is stem, we therefore have $|G/[G,G]| \leq p^3$.
Hence $|G| = |G/[G,G]| \cdot |\gamma_2(G)| \leq p^6$, giving a final contradiction.
\end{proof}	

We use Theorem \ref{t:cp-p} to obtain a global bound on commuting probability that applies to all finite groups.

\begin{proof}[Proof of Corollary \ref{c:cp}]
Let $p$ be a prime dividing the order of $G$. By \cite[Lemma 2.6]{Bog04}, the $p$-part of $\B_0(G)$ embeds into $\B_0(S)$, where $S$ is a Sylow $p$-subgroup of $G$. At the same time, we have $\cp(S) \geq \cp(G) > 1/4$ by \cite{Gur06}, which gives $\B_0(S) = 0$ by Theorem \ref{t:cp-p}. Hence $\B_0(G) = 0$.
\end{proof}

The bound given by both Theorem \ref{t:cp-p} and Corollary \ref{c:cp} is sharp, as shown by the existence of groups given in Theorem \ref{t:class2} with commuting probability equal to $(2p^2 + p - 2)/p^5$ and a nontrivial Bogomolov multiplier. We also note that no sensible inverse of neither Theorem \ref{t:cp-p} nor Corollary \ref{c:cp} holds. As an example, let $G$ be a noncommutative group with $\B_0(G) = 0$, and take $G_n$ to be the direct product of $n$ copies of $G$. It is clear that $\cp(G_n) = \cp(G)^n$, which tends to $0$ with large $n$, and $\B_0(G) = 0$ by \cite{Kan12}. So there exist groups with arbitrarily small commuting probabilities yet trivial Bogomolov multipliers.

\section{Applications}
\label{s:app}

\noindent Using Theorem \ref{t:cp-p}, a nonprobabilistic criterion for the vanishing of the Bogomolov multiplier is first established.

\begin{corollary}
\label{c:derivedp2}
Let $G$ be a finite group. If $|[G,G]|$ is cubefree, then $\B_0(G)$ is trivial.
\end{corollary}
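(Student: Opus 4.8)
The plan is to reduce to a single Sylow subgroup and then beat the threshold of Theorem \ref{t:cp-p} with a crude centralizer count, so that the structure theory of $\B_0$-minimal groups is not even needed. First I would invoke \cite[Lemma 2.6]{Bog04}, exactly as in the proof of Proposition \ref{p:pgroups}: for each prime $p$ dividing $|G|$, the $p$-primary component of $\B_0(G)$ embeds into $\B_0(S)$, where $S$ is a Sylow $p$-subgroup of $G$. Since $\B_0(G)$ is a finite abelian group, equal to the direct sum of its primary components, it therefore suffices to prove $\B_0(S) = 0$ for every such $S$.

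Fix $p$ and $S$. Because $S \leq G$ we have $[S,S] \leq [G,G]$, and $[S,S]$ is a $p$-group, so $|[S,S]|$ divides the $p$-part of $|[G,G]|$. As $|[G,G]|$ is cubefree, this forces $|[S,S]| \leq p^2$. The key consequence is a uniform bound on class sizes: every conjugate of $g \in S$ has the form $g^x = g[g,x]$ with $[g,x] \in [S,S]$, so the conjugacy class satisfies $g^S \subseteq g[S,S]$ and hence $|S : C_S(g)| = |g^S| \leq p^2$, that is $|C_S(g)| \geq |S|/p^2$ for all $g \in S$.

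I would then estimate $\cp(S)$ directly. Writing $\cp(S) = |S|^{-2}\sum_{g \in S}|C_S(g)|$ and isolating the identity, whose centralizer is all of $S$, from the remaining terms (each at least $|S|/p^2$) yields the strict inequality $\cp(S) > 1/p^2$. It remains to compare $1/p^2$ with the bound $(2p^2+p-2)/p^5$ of Theorem \ref{t:cp-p}. Clearing denominators, $1/p^2 \geq (2p^2+p-2)/p^5$ is equivalent to $p^3 - 2p^2 - p + 2 \geq 0$, and the left-hand side factors as $(p-2)(p-1)(p+1)$, which is nonnegative for every prime $p$. Thus $\cp(S) > 1/p^2 \geq (2p^2+p-2)/p^5$, and Theorem \ref{t:cp-p} gives $\B_0(S) = 0$, completing the argument.

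The one delicate point, and the step I expect to be the main obstacle, is the prime $p = 2$: there the factorization $(p-2)(p-1)(p+1)$ vanishes, and the clean inequality $1/p^2 \geq (2p^2+p-2)/p^5$ degenerates to the equality $1/4 = 1/4$. This is precisely why the \emph{strict} inequality $\cp(S) > 1/p^2$ is indispensable; it is guaranteed because the identity contributes a full-size centralizer, so not every summand attains the minimum value $|S|/p^2$. That strictness is exactly what pushes $\cp(S)$ above the boundary value $1/4$ and lets Theorem \ref{t:cp-p} apply in the borderline case of $2$-groups, where equality in the threshold would otherwise leave the conclusion just out of reach.
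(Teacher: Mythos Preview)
Your proof is correct and follows the same route as the paper: reduce to a Sylow $p$-subgroup $S$ via \cite[Lemma~2.6]{Bog04}, use cubefreeness to get $|[S,S]| \leq p^2$, deduce $\cp(S) > 1/p^2 \geq (2p^2+p-2)/p^5$, and invoke Theorem~\ref{t:cp-p}. The only cosmetic difference is in how the strict inequality $\cp(S) > 1/p^2$ is obtained---the paper counts linear characters to get $\kk(S) > |S:[S,S]|$ (nonabelian $S$ has at least one nonlinear irreducible), whereas you bound conjugacy class sizes by $|[S,S]|$ and use the identity for strictness---but the two arguments are interchangeable.
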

\begin{proof}
Let $S$ a nonabelian Sylow $p$-subgroup of $G$.
By counting only the linear characters of $S$, we obtain the bound $\kk(S) > |S:[S,S]| \geq |S|/p^2$, which further gives $\cp(S) > 1/p^2 \geq (2p^2 + p - 2)/p^5$.
Theorem \ref{t:cp-p} implies $\B_0(S) = 0$.
As the $p$-part of $\B_0(G)$ embeds into $\B_0(S)$ \cite[Lemma 2.6]{Bog04}, we conclude $\B_0(G) = 0$.
\end{proof}

The restriction to third powers of primes in Corollary \ref{c:derivedp2} is best possible, as shown by the $\B_0$-minimal groups given in Theorem \ref{t:class2}, whose derived subgroups are of order $p^3$. We remark that another way of stating Corollary \ref{c:derivedp2} is by saying that the Bogomolov multiplier of a finite extension of a group of cubefree order by an abelian group is trivial.
This may be compared with \cite[Lemma 4.9]{Bog88}.

We now apply Corollary \ref{c:cp} to provide some curious examples of $\B_0$-minimal isoclinism families, determined by their stem groups. These in particular show that there is indeed no upper bound on the nilpotency class of a $\B_0$-minimal group. 

\begin{example}
\label{e:bignilpotency}
For every $n \geq 6$, consider the group
\[
	G_n = \left\langle \begin{array}{l|l}
			 \multirow{2}{*}{$a,b,c$} & a^2 = b^2 = 1, \, c^2 = [a,c], \\
			 & [c,b] = [c, {}_{n-1} a],  \, [b,a] \text{ central}, \, \text{class $n$} 
		\end{array} \right\rangle.
\]
Another way of presenting $G_n$ is by a polycyclic generating sequence $g_i$, $1 \leq i \leq n$, subject to the following relations: 
$g_1^2 = g_2^2 = 1,$ 
$g_i^2 = g_{i+1} g_{i+2}$ for $2 < i < n-2$,
$g_{n-2}^2 = g_{n-1}$,
$g_{n-1}^2 = g_n^2 = 1$,
$[g_2, g_1] = g_n$,
$[g_i, g_1] = g_{i+1}$ for $2 < i < n-1$,
$[g_{n-1}, g_1] = [g_n, g_1] = 1$,
$[g_3, g_2] = g_{n-1}$,
and all the nonspecified commutators are trivial. 
Note that the group $G_6$ is the group given in Example \ref{e:64}.
For any $n \geq 6$, the group $G_n$ is a group of order $2^n$ and of nilpotency class $n-3$, generated by $g_1, g_2, g_3$. It is readily verified that $Z(G_n) = \langle g_{n-1}, g_n \rangle \cong \Z/2\Z \times \Z/2\Z$ and $[G_n,G_n] = \langle g_4, g_n \rangle \cong \Z/2^{n-4}\Z \times \Z/2\Z$, whence $G_n$ is a stem group.
We claim that the group $G_n$ is in fact a $\B_0$-minimal group.

As we will be using Corollary \ref{c:cp}, let us first inspect the conjugacy classes of $G_n$.
It is straightforward that centralizers of noncentral elements of $\Phi(G_n)$ are all equal to the maximal subgroup $\langle g_2, g_3 \rangle \Phi(G_n)$ of $G_n$.
Furthermore, whenever the normal form of an element $g \in G_n \backslash \Phi(G)$ with respect to the above polycyclic generating sequence does not contain $g_1$, we have $C_{G_n}(g) = \langle g \rangle \Phi(G_n)$, and when the element $g$ does have $g_1$ in its normal form, we have $C_{G_n}(g) = \langle g \rangle Z(G_n)$.
Having determined the centralizers, we count the number of conjugacy classes in $G_n$.
The central elements all form orbits of size $1$.
The elements belonging to $\Phi(G) \backslash Z(G)$ all have orbits of size $2^n/2^{n-1} = 2$ and there are $2^{n-3} - 4$ of them, which gives $2^{n-4} - 2$ conjugacy classes.
Next, the elements not belonging to $\Phi(G)$ and not having $g_1$ in their normal form have orbits of size $2^n/2^{n-2} = 4$ and there are $3 \cdot 2^{n-3}$ of them, which gives $2^{n-5}$ conjugacy classes.
Finally, the elements that do have $g_1$ in their normal form each contribute one conjugacy class depending on the representative modulo $\Phi(G)$, which gives four conjugacy classes all together.
Thus $\kk(G_n) = 2^{n-4} + 3 \cdot 2^{n-5} + 6$, and hence $\cp(G_n) = 1/2^4 + 3/2^5 + 6/2^n$.

We now show that $\B_0(G_n) \cong \Z/2\Z$.
First of all, we find the generator of $\B_0(G_n)$.
The curly exterior square $G_n \curlywedge G_n$ is generated by the elements $g_3 \curlywedge g_2$ and $g_i \curlywedge g_1$ for $2 \leq i \leq n-2$.
As $G_n$ is metabelian, the group $G_n \curlywedge G_n$ is itself abelian.
Any element $w \in G_n \curlywedge G_n$ may therefore be written in the form $w = (g_3 \curlywedge g_2)^{\beta} \prod_{i=2}^{n-2} (g_i \curlywedge g_1)^{\alpha_i}$ for some integers $\beta, \alpha_i$.
Note that $w$ belongs to $\B_0(G_n)$ precisely when $[g_3, g_2]^\beta \prod_{i=2}^{n-2} [g_i, g_1]^{\alpha_i}$ is trivial.
The latter product may be written in terms of the given polycyclic generating sequence as $\prod_{i=3}^{n-2} g_{i+1}^{\alpha_i}  g_{n-1}^{\beta} g_{n}^{\alpha_2}$.
This implies $\alpha_i = 0$ for all $2 \leq i < n-2$ and $\alpha_{n-2} + \beta \equiv 0$ modulo $2$.
Note that we have $(g_{n-2} \curlywedge g_1)^2 = g_{n-2} \curlywedge g_1^2 = 1$ and similarly $(g_3 \curlywedge g_2)^2 = 1$.
Denoting $v = (g_3 \curlywedge g_2)(g_1 \curlywedge g_{n-2})^{-1}$, we thus have $\B_0(G_n) = \langle v \rangle$ with $v$ of order dividing $2$.
Let us now show that the element $v$ is in fact nontrivial in $G_n \curlywedge G_n$.
To this end, we construct a certain $\B_0$-pairing $\phi \colon G_n \times G_n \to \Z/2\Z$.
We define this pairing on tuples of elements of $G_n$, written in normal form.
For $g = \prod_{i=1}^n g_i^{a_i}$ and $h = \prod_{i=1}^n g_i^{b_i}$, put 
\[
	\phi(g,h) = {
	\left| \begin{smallmatrix}
	a_2 & b_2 \\ a_3 & b_3
	\end{smallmatrix} \right| + 2\Z.
	}
\]
We now show that $\phi$ is indeed a $\B_0$-pairing.
It is straightforward that $\phi$ is bilinear and depends only on representatives modulo $\Phi(G_n)$.
Suppose now that $[x,y] = 1$ for some $x,y \in G_n$.
If $x \in \Phi(G_n)$, then clearly $\phi(x,y) = 2\Z$. 
On the other hand, if $x \notin \Phi(G_n)$, then we must have $y \in C_{G_n}(x) \leq \langle x \rangle \Phi(G_n)$ by above, from which it follows that $\phi(x,y) = \phi(x,x) = 2\Z$.
We have thus shown that the mapping $\phi$ is a $\B_0$-pairing.
Therefore $\phi$ determines a unique homomorphism of groups $\phi^* \colon G_n \curlywedge G_n \to \Z/2\Z$ such that $\phi^*(g \curlywedge h) = \phi(g,h)$ for all $g,h \in G_n$.
As we have $\phi^*(v) = \phi(g_3, g_2) - \phi(g_1, g_{n-2}) = 1 + 2\Z$, the element $v$ is nontrivial.
Hence $\B_0(G_n) = \langle v \rangle \cong \Z/2\Z$, as required.

The above determination of centralizers also enables us to show that every subgroup of $G_n$ has commuting probability greater than $1/4$.
Note that it suffices to prove this only for maximal subgroups of $G_n$.
To this end, let $M$ be a maximal subgroup of $G_n$.
Being of index $2$ in $G_n$, $M$ contains at least one of the elements $g_3$, $g_2$, $g_2 g_3$.
If it contains two of these, then we have $M = \langle g_2, g_3 \rangle \Phi(G)$ and so $M/Z(M) = \Z/2\Z \times \Z/2\Z$.
By \cite{Gus73}, this implies $\cp(M) = 5/8$ and we are done. 
Now assume that $M$ contains exactly one of the elements $g_3$, $g_2$, $g_2 g_3$.
The centralizer of any element in $M$ not belonging to $\Phi(G)$ is, by above, of index $2^{n-1}/2^{n-2} = 2$ in $M$.
There are $2^{n-3}$ of these elements, hence contributing $2^{n-4}$ to the number of conjugacy classes in $M$.
Similarly, the elements belonging to $\Phi(G) \backslash Z(G)$ all have their centralizer of index $2^{n-1}/2^{n-2} = 2$ in $M$ and there are $2^{n-3} - 4$ of these elements, hence contributing $2^{n-4} - 2$ conjugacy classes in $M$.
This gives $\kk(M) > 4 + 2^{n-4} + (2^{n-4} - 2) > 2^{n-3}$ and therefore $\cp(M) > 1/4$.
It now follows from Corollary \ref{c:cp} that every proper subgroup of $G_n$ has a trivial Bogomolov multiplier. 

Lastly, we verify that Bogomolov multipliers of proper quotients of $G_n$ are all trivial.
To this end, let $N$ be a proper normal subgroup of $G_n$.
If $g_{n-1} \in N$, then the elements $g_2$ and $g_3$ commute in $G_n/N$.
The group $\langle g_2, g_3 \rangle \Phi(G_n)N$ is therefore a maximal abelian subgroup of $G_n/N$, and it follows from \cite{Bog88} that $\B_0(G_n/N) = 0$.
Suppose now that $g_{n-1} \notin N$.
Note that we have $G_n/N \simeq G_n/([G_n, G_n] \cap N)$ by \cite{Hal40}.
Since the Bogomolov multiplier is an isoclinism invariant, we may assume that $N$ is contained in $[G_n, G_n] = \langle g_4, g_n \rangle \cong \Z/2^{n-4}\Z \times \Z/2\Z$ .
As $g_{n-1}$ is the only element of order $2$ in $\langle g_4 \rangle$ and $g_{n-1} \notin N$, we must have either $N = \langle g_n \rangle$ or $N = \langle g_{n-1} g_n \rangle$.
Suppose first that $N = \langle g_n \rangle$ and consider the factor group $H = G_n/N$. 
Denoting $v = (g_3 \curlywedge g_2)(g_1 \curlywedge g_{n-2})$, we show as above that $\B_0(H) = \langle v \rangle$.
Note that we have $g_2 g_{n-2} \curlywedge g_1 g_3 = (g_2 \curlywedge g_3) (g_2 \curlywedge g_1) (g_{n-2} \curlywedge g_3) (g_{n-2} \curlywedge g_1) = v$ in $H$, which implies that $v$ is trivial in $H \curlywedge H$, whence $\B_0(H) = 0$.
Now consider the case when $N = \langle g_{n-1} g_n \rangle$ and put $H = G_n/N$.
Denoting $v_1 = (g_3 \curlywedge g_2)(g_1 \curlywedge g_2)$ and $v_2 = (g_{n-2} \curlywedge g_1)(g_1 \curlywedge g_2)$, we have $\B_0(H) = \langle v_1, v_2 \rangle$.
Note that $g_1 \curlywedge g_2 g_{n-2} = v_1$ and $g_2 \curlywedge g_1 g_3 = v_2$ in $H$, which implies that $v_1$ and $v_2$ are both trivial in $H \curlywedge H$, whence $\B_0(H) = 0$.
This completes the proof of the fact that $G_n$ is a $\B_0$-minimal group.
\end{example}

As stated in the introduction, these examples contradict a part of the statement of \cite[Theorem 4.6]{Bog88} and \cite[Lemma 5.4]{Bog88}.
We found that the latter has been used in proving triviality of Bogomolov multipliers of finite simple groups \cite{Kun08}.
The claim is reduced to showing $\B_0(\Out(L))$ to be trivial for all finite simple groups $L$.
Standard arguments from \cite{Bog88} are then used to further reduce this to the case when $L$ is of type $A_n(q)$ or $D_{2m + 1}(q)$.
These two cases are dealt with using the above erroneous claim. With some minor adjustments, the argument of \cite{Kun08} can be saved as follows.
First note that the linear groups $A_n(q)$ have been treated separately in \cite{Bog04}.
We remark that the argument for the exceptional case $n=2,q=9$ uses a consequence of the above statements, see also \cite{Hos11}.
The result remains valid, since the Sylow $3$-subgroup is abelian in this case. 
As for orthogonal groups $D_{2m+1}(q)$, note that the derived subgroup of $\Out(D_{2m+1}(q))$ is a subgroup of the group of outer-diagonal automorphisms, which is isomorphic to $\Z/(4,q-1)\Z$, see \cite{Wil09}.
Corollary \ref{c:derivedp2} now gives the desired result.

Lastly, we say something about groups of small orders to which Theorem \ref{t:cp-p} may be applied.
Given an odd prime $p$, it is readily verified using \cite{Jam80} that among all isoclinism families of rank at most $5$, only the family $\Phi_{10}$ has commuting probability lower or equal than $(2p^3 + p - 2)/p^5$.
Theorem \ref{t:cp-p} therefore provides a unified explanation of the known result that Bogomolov multipliers of groups of order at most $p^5$ are trivial except for the groups belonging to the family $\Phi_{10}$ \cite{Hos11,Mor12p5}.
Next, consider the groups of order $p^6$.
Out of a total of $43$ isoclinism families, $19$ of them have commuting probabilities exceeding the above bound.
This includes all groups of nilpotency class $2$.
For $2$-groups, use the classification \cite{Jam90} to see that all commuting probabilities of groups of order at most $32$ are all greater than $1/4$.
With groups of order $64$, there are $237$ groups with the same property out of a total of $267$ groups.
Again, this may be compared with known results \cite{Chu09}.
Finally, one may use Theorem \ref{t:cp-p} on the Sylow subgroups of a given group rather than using Corollary \ref{c:cp} directly, thus potentially obtaining a better bound on commuting probability that ensures triviality of the Bogomolov multiplier.



\begin{thebibliography}{99}
%

\bibitem{Art72}
M. Artin, and D. Mumford,
{\it Some elementary examples of unirational varieties which are not rational},
Proc. London. Math. Soc. (3) {\bf 25} (1972), 75--95.
%

%

%

%

%

%

%

%

\bibitem{Bog88}
F. A. Bogomolov, {\it The Brauer group of quotient spaces by linear group actions},
Izv. Akad. Nauk SSSR Ser. Mat {\bf 51} (1987), no. 3, 485--516.
%

\bibitem{Bog04}
F. A. Bogomolov, J. Maciel, and T. Petrov,
{\it Unramified Brauer groups of finite simple groups of type $A_\ell$},
Amer. J. Math. {\bf 126} (2004), 935--949.
%

%

%

%

%

\bibitem{Che13}
Y. Chen, and R. Ma,
{\em Bogomolov multipliers of groups of order $p^6$},
{arXiv:1302.0584 [math.AG]} (2013).
%

\bibitem{Chu09}
H. Chu, S. Hu., M. Kang, and B. E. Kunyavski\u\i, {\it Noether's problem and the unramified Brauer group for groups of order $64$},
Int. Math. Res. Not. IMRN {\bf 12} (2010), 2329--2366.
%

\bibitem{Chu08}
H. Chu, S. Hu., M. Kang, and Y. G. Prokhorov, {\it Noether's problem for groups of order $32$},
J. Algebra {\bf 320} (2008), 3022--3035.
%

%

%

%

%

%

%

%

%

\bibitem{Erd68}
P. Erd\"{o}s, and P. Tur\'{a}n, {\it On some problems of a statistical group-theory. IV},
Acta Math. Acad. Sci. Hungar {\bf 19} (1968), 413--435. 
%

\bibitem{Fei63}
W. Feit, and J. G. Thompson, {\it Solvability of groups of odd order},
Pacific J. Math. {\bf 13} (1963), 775--1029.

%

%

%

\bibitem{Gur06}
R. M. Guralnick, and G. R. Robinson, 
{\it On the commuting probability in finite groups}, J. Algebra {\bf 300} (2006), no. 2, 509--528. 
%

\bibitem{Gus73}
W. H. Gustafson, {\it What is the probability that two group elements commute?},
Amer. Math. Monthly {\bf 80} (1973), 1031--1034.
%

\bibitem{Hal40}
P. Hall, {\it The classification of prime-power groups},
J. Reine Angew. Math. {\bf 182} (1940), 130--141.
%

\bibitem{Hal64}
M. Hall, and J. K. Senior, {\it The Groups of Order $2^n$ (n is Less Than Or Equal to 6)},
Macmillan, New York, 1964.
%

%

\bibitem{Hos11}
A. Hoshi, and M. Kang, {\it Unramified Brauer groups for groups of order $p^5$},
arXiv:1109.2966 [math.AC] (2011).
%

\bibitem{Hos12}
A. Hoshi, M. Kang, and B. E. Kunyavski\u\i,
{\it Noether problem and unramified Brauer groups},
arXiv:1202.5812 [math.AG] (2012).
%




\bibitem{Hup67}
B. Huppert, {\it Endliche Gruppen}, Springer-Verlag, Berlin, 1967.


%

%

\bibitem{Jam80}
R. James, {\em The groups of order $p^6$ ($p$ an odd prime)}, Math. Comp. {\bf 34} (1980), 613--637.
%

\bibitem{Jam90}
R. James, M. F. Newman, and E. A. O'Brien, {\em The groups of order $128$},
J. Algebra {\bf 129} 1 (1990), 136--158.
%

\bibitem{ARXIVREF}
U. Jezernik and P. Moravec, {\em Bogomolov multipliers of all groups of order $128$},
{arXiv} (2013).

\bibitem{Kan12}
M. Kang,
{\em Bogomolov multipliers and retract rationality for semi-direct products},
{arXiv:1207.5867 [math.AG]} (2012).
%


%

\bibitem{Kun08}
B. E. Kunyavski\u\i, {\it The Bogomolov multiplier of finite simple groups},
Cohomological and geometric approaches to rationality problems,  209--217, Progr. Math., 282,
Birkh\"{a}user Boston, Inc., Boston, MA, 2010.
%

%

\bibitem{Les95}
P. Lescot, {\it Isoclinism classes and commutativity degrees of finite groups},
J. Algebra {\bf 177} (1995), 847--869.
%

%

%

%

\bibitem{Man06}
A. Mann, {\em Elements of minimal breadth in finite $p$-groups and Lie algebras},
J. Aust. Math. Soc. {\bf 81} (2006), 209--214.
%

\bibitem{Mil52}
C. Miller, {\it The second homology of a group},
Proc. Amer. Math. Soc. {\bf 3} (1952), 588--595.
%

%

%

\bibitem{Mor11}
P. Moravec, {\it Unramified groups of finite and infinite groups}, Amer. J. Math. {\bf 134} (2012), no. 6, 1679--1704.
%

\bibitem{Mor12p5}
P. Moravec,
{\em Groups of order $p^5$ and their unramified Brauer groups},
J. Algebra {\bf 372} (2012), 320--327.
%

\bibitem{Mor14}
P. Moravec, {\it Unramified Brauer groups and isoclinism}, Ars Math. Contemp. {\bf 7} (2014), 337--340.
%

%

%

%

\bibitem{Noe16}
E. Noether, {\it Gleichungen mit vorgeschriebener Gruppe}, Math. Ann. {\bf 78} (1916), 221--229. 
%

\bibitem{Obr05}
E. A. O'Brien, and M. R. Vaughan-Lee,
{\em The groups with order $p^7$ for odd prime $p$},
J. Algebra {\bf 292} (1) (2005), 243--258.

%

%

%

%

%

%

%

%

%

%

%

%

%

%

\bibitem{Vau01}
M. R. Vaughan-Lee,
{\em Counting $p$-groups of exponent $p$},
\url{http://www.math.auckland.ac.nz/~obrien/research//p7/p7-exponent.pdf}.
%

\bibitem{Wil09}
R. A. Wilson,
{\em The finite simple groups},
Graduate Texts in Mathematics 251, Springer-Verlag, London, 2009.


\end{thebibliography}
\end{document}